\newtheorem{example}{Example}[section]
\newtheorem{lemma}{Lemma}[section]
\newtheorem{remark}{Remark}[section]
\newtheorem{theorem}{Theorem}[section]
\numberwithin{equation}{section}
\numberwithin{figure}{section}
\numberwithin{table}{section}
\numberwithin{theorem}{section}
\newenvironment{proof}[1][Proof]{\begin{trivlist}
\item[\hskip \labelsep {\bfseries #1}]}{\end{trivlist}}
\renewcommand{\qed}{\hfill \nobreak \ifvmode \relax \else
      \ifdim\lastskip<1.5em \hskip-\lastskip
      \hskip1.5em plus0em minus0.5em \fi \nobreak
      \vrule height0.75em width0.5em depth0.25em\fi}
\renewcommand{\vec}[1]{\mbox{\boldmath \small $#1$}}
\begin{document}

\begin{frontmatter}

\title{A stochastic Galerkin method for general  system of quasilinear hyperbolic conservation laws with uncertainty}

\author{Kailiang Wu},
\ead{wukl@pku.edu.cn}
\author[label2]{Huazhong Tang}
\ead{hztang@math.pku.edu.cn}
\thanks[label2]{Corresponding author. Tel:~+86-10-62757018;
Fax:~+86-10-62751801.}
\address{HEDPS, CAPT \& LMAM, School of Mathematical Sciences, Peking University, Beijing 100871, P.R. China}
\author{Dongbin Xiu}
\ead{dongbin.xiu@utah.edu}
\address{Department of Mathematics, and Scientific Computing and Imaging
Institute, University of Utah, Salt Lake City, UT 84112}

\date{\today{}}

\maketitle

\begin{abstract}
This paper is concerned with generalized polynomial chaos (gPC)
approximation for a general  system of
quasilinear hyperbolic conservation laws with uncertainty.
The  one-dimensional (1D) hyperbolic system is
first symmetrized with the aid of left eigenvector matrix of the
Jacobian matrix. Stochastic Galerkin method is then applied to derive
the equations for the gPC expansion coefficients.
The resulting deterministic gPC Galerkin system is proved to be
symmetrically  hyperbolic. This important property then allows one to
use a variety of numerical schemes for spatial and temporal
discretization. Here
a  higher-order and path-conservative finite volume WENO scheme is
adopted in space, along with a third-order total variation diminishing
Runge-Kutta method in time.
The method is further extended to two-dimensional (2D) quasilinear
hyperbolic system with uncertainty, where the symmetric hyperbolicity
of the one-dimensional system is carried over via the operator
splitting technique.
Several 1D and 2D numerical experiments
are conducted to demonstrate the accuracy and effectiveness of the proposed gPC
stochastic Galerkin method.
\end{abstract}

\begin{keyword}
uncertainty quantification;
hyperbolic conservation laws;
stochastic Galerkin methods;
generalized polynomial chaos;
symmetrically  hyperbolic;
operator splitting.
\end{keyword}
\end{frontmatter}

\section{Introduction}


This paper is concerned with uncertainty quantification (UQ) of general
system of quasilinear hyperbolic conservation laws.
UQ has received increasing attention in recent years and found its use
in many problems.
One of the most widely used UQ methods is generalized polynomial chaos (gPC) \cite{Xiu2002}.
As an extension of the classical polynomial chaos \cite{Ghanem1991},
gPC approximates the uncertain solutions as a (truncated) generalized
Fourier series by utilizing orthogonal polynomials, and the unknown
expansion
coefficient functions can be computed by an intrusive or a non-intrusive method.
The intrusive methods typically employ  stochastic Galerkin (SG) projection, which
results in  a larger coupled deterministic system of equations for the
gPC coefficients.
The non-intrusive methods are often of stochastic collocation (SC) type. They solve
the original problem at some sampling points of the random variables,
and then evaluate the gPC coefficients by using the polynomial
interpolation or numerical quadrature,
e.g. \cite{Reagan2003,Xiu2005,Babuska2007,Nobile2008}. For a review of
the methods, see \cite{Xiu2010}.

Although the gPC-SG method has been successfully applied to a large
variety of problems,
its applications to hyperbolic conservation laws is still quite
limited. This is mainly due to the lack of theoretical
understanding of the resulting deterministic gPC-SG system.
For the linear and scalar hyperbolic equations,
 the resulting gPC-SG systems are still hyperbolic,
 see e.g. \cite{Chen2005,Gottlieb2008,Pulch2012,Jin:JSC2015}.
 However, for a general system of quasilinear hyperbolic conservation
 laws, the resulting gPC-SG system may be not globally hyperbolic
 \cite{Despres2013}.
The lack of hyperbolicity  means that the Jacobian matrix  may contain
complex eigenvalues, which lead to ill-posedness of the initial or
boundary problem and  instability of the numerical
computations. Recently, some efforts were  made to obtain well-behaved
gPC-SG system for a system of  hyperbolic conservation laws.
Despr\'es et al. used the gPC approximation to the entropy variables
instead of the conservative variables in the Euler equations
\cite{Despres2013}, and  proved that the resulting gPC-SG system is
hyperbolic, based on the fact that the Euler equations can be reformulated
in a symmetrically hyperbolic form in term of the entropy variables.
This method, however, can not be extended to a general quasilinear
hyperbolic system without a convex entropy pair or a non-symmetrically
hyperbolic system.
Moreover,  a minimization problem needs be solved at each spatial
mesh point and time step, and thus the method is time-consuming,
especially for multi-dimensional problems.
An approach using the Roe variables was proposed for the
Euler equations in \cite{Pettersson2014}. Although effective, its
extension to general systems is also very limited, due to the Roe
linearization.
 More recently, a class of operator splitting based SG methods
were
developed for the Euler equations  \cite{Chertock:Euler} and the
Saint-Venant system  \cite{Chertock:Shallow}.
The idea is to split the underlying system into several subsystems,
and the gPC-SG method for each of these subsystems may result in
globally hyperbolic gPC-SG system.
However, such splitting is problem dependent and difficult
to extended  to a general system of quasilinear hyperbolic
conservation laws.

The gPC Galerkin solution method for a general system
of quasilinear hyperbolic conservation laws, which is still an open
problem, is discussed in this paper. The major contribution of this
paper is the development of a gPC Galerkin approach that results a
symmetrically hyperbolic
system of equations for the gPC coefficients, for any general
quasilinear conservation laws, e.g., Euler equations.
The key ingredient of the method is the symmetrization of a general 1D
hyperbolic  system via the left eigenvector matrix of its Jacobian matrix.
The symmetric form of the 1D hyperbolic system is then discretized and
approximated by the gPC Galerkin approach. It is then proven that the
resulting larger gPC-SG system is symmetrically hyperbolic.
The symmetric hyperbolicity of the gPC-SG system  is an important
property and allows one to
employ a variety of proper numerical schemes.
In this paper a fifth-order accurate, path-conservative, finite volume
WENO scheme is used in space, and a third-order accurate, total
variation diminishing, explicit Runge-Kutta method is used in time.
For multi-dimensional problems, operator splitting technique can be
employed to take advantage of the hyperbolicity of the one-dimensional
gPC-SG systems.

The rest of the paper is organized as follows: Section \ref{sec:1D} presents the gPC-SG method for the general 1D system of hyperbolic conservation laws with uncertainty, including the discretization in random space in Subsection \ref{sec:1D:random}, the spatial discretization in Subsection \ref{sec:1D:space}, and
the time discretization in Subsection \ref{sec:1D:time}.
Section \ref{sec:2D} extends  the proposed gPC-SG method to multi-dimensional case.
Section \ref{sec:experiments} conducts several 1D and 2D numerical experiments
 to demonstrate the performance and accuracy of the
proposed gPC-SG method. Concluding remarks are presented in Section \ref{sec:conclude}.

%
%




\section{One-dimensional gPC-SG method}\label{sec:1D}

This section considers the gPC-SG method of a general quasilinear hyperbolic system
\begin{equation}\label{eq:1D}
\frac{ \partial } {\partial t}  \vec U(x,t,\vec \xi)  +
\frac{ \partial } {\partial x} \vec F(\vec U(x,t,\vec \xi);\vec \xi )
= \vec 0, \quad  x \in \Omega \subseteq \mathbb{R},~  t >0,
\end{equation}
where $\vec U(x,t,\vec \xi)\in \mathbb{R}^N$ is the unknown, $\vec
F(\vec U;\vec \xi)\in \mathbb{R}^N$ is the flux function, and $\vec
  \xi \in \Theta \subset \mathbb{R}^d$ denotes the random variables
that parameterize the uncertain coefficients or initial conditions of the given problem.
The system \eqref{eq:1D} is hyperbolic if the Jacobian matrix $\vec
A(\vec U; \vec \xi) :=  \partial \vec F(\vec U;\vec \xi) / \partial
\vec U \in \mathbb{R}^{N \times N}$ is real diagonalizable for all
admissible state $\vec U \in {\cal G} $ and almost everywhere $\vec \xi \in \Theta $,
where $\cal G$ denotes the set of admissible state.

\subsection{Symmetrization}\label{sec:1D:symm}

For smooth solution, the system \eqref{eq:1D} can be equivalently rewritten in the quasilinear form
\begin{equation}\label{eq:1Dnoncon}
\frac{ \partial } {\partial t}  \vec U(x,t,\vec \xi)  + \vec A(\vec U;\vec \xi ) \frac{ \partial } {\partial x} \vec U(x,t,\vec \xi) = \vec 0.
\end{equation}
Let the invertible matrix $\vec{L}(\vec U;\vec \xi) $ be the left eigenvector matrix of the matrix $\vec A(\vec U; \vec \xi) $, then
$$
\vec A= \vec L^{-1} \vec \Lambda (\vec U,\vec \xi) \vec L,
\quad a.e. ~~\vec \xi\in \Theta,
$$
where the diagonal matrix $\vec \Lambda (\vec U,\vec \xi)  = {\mathrm{diag}} \left\{ \lambda_1(\vec U,\vec \xi), \cdots, \lambda_N(\vec U,\vec \xi) \right\}$ and
$ \lambda_{\ell}(\vec U,\vec \xi)$, $\ell=1,\cdots,N$, are  $N$ eigenvalues of $\vec A(\vec U; \vec \xi) $.
If multiplying \eqref{eq:1Dnoncon} from the left
by the positive-definite matrix $\vec A_0 (\vec U;\vec \xi):= \vec L^{\mathrm T} \vec L$,
one may obtain a symmetric system
\begin{equation}\label{eq:1Dsymmetric}
\vec A_0 (\vec U;\vec \xi) \frac{ \partial } {\partial t}  \vec U(x,t,\vec \xi)  + \vec A_1(\vec U;\vec \xi ) \frac{ \partial } {\partial x} \vec U(x,t,\vec \xi) = \vec 0,
\end{equation}
where $ \vec A_1(\vec U;\vec \xi ) :=  \vec L^{\mathrm T} \vec \Lambda \vec L $ is a real symmetric matrix and $\vec L^{\mathrm T}$ is the transpose of $\vec L$.
 The system \eqref{eq:1Dsymmetric} is the starting point of our
stochastic Galerkin method for the system \eqref{eq:1D}.

\subsection{Discretization in random space}\label{sec:1D:random}

Let $\{ \phi_i (\vec \xi) \}_{i \in \mathbb{N}\cup\{0\}}$ represent a complete set of { polynomials in $d$ variables and are orthonormal in the sense that
$$
 \int_\Theta {\phi _i}(\vec \xi){\phi _j}(\vec \xi) {\mathrm d} \mu(\vec \xi) = \delta_{ij},
$$
where  $\mu(\vec \xi)$  is the probability distribution function of $\vec \xi$, and $\delta_{ij}$ is the Kronecker symbol.}
We expect to seek a finite approximation
\begin{equation}\label{eq:gPCAPP}
\vec u_M (x,t,\vec \xi) = \sum\limits_{i = 0}^M \big({{\vec {\hat u}_i}(x,t)\big)^{\mathrm{T}} {\phi _i}(\vec \xi)} \in {\cal G},
\end{equation}
to the solution $\vec U(x,t,\vec \xi)$,
where ${\vec {\hat u}_i}(x,t)$ is row vector, $i = 0,\cdots, M$.
To derive the gPC-SG method for the system \eqref{eq:1D},
\eqref{eq:gPCAPP} is  substituted into  \eqref{eq:1Dsymmetric} and then enforce the residual to be
orthogonal to ${\mathrm {span}}\{\phi_0(\vec \xi) ,\cdots,\phi_M(\vec \xi)\}$.
The final deterministic gPC system for the expansion coefficients
is  as follows
\begin{equation}\label{eq:1Ddeterministic}
\hat{\vec A}_0 \frac{\partial }{\partial t} \hat{\vec U}(x,t) + \hat{\vec A}_1 \frac{\partial }{\partial x} \hat{\vec U} (x,t) = \vec 0,
\end{equation}
where $\hat{\vec U} := ( \vec {\hat u}_0, \cdots,  \vec {\hat u}_M
)^{\mathrm{T}} \in \mathbb{R}^{(M+1)N}$ and the coefficient matrices $\hat{\vec A}_k \in \mathbb{R}^{(M+1)N\times (M+1)N}$, $k = 0,1$, are of  the form
$$
\hat{\vec A}_k = \left( {\begin{array}{*{20}{c}}
   \hat{\vec A}_{00}^{(k)} &  \cdots  & \hat{\vec A}_{0M}^{(k)}  \\
    \vdots  & {} &  \vdots   \\
   \hat{\vec A}_{M0}^{(k)} &  \cdots  & \hat{\vec A}_{MM}^{(k)}
\end{array}} \right),
$$
here the blocks or sub-matrices are defined by
$$
\hat{\vec A}_{ij}^{(k)} = \int_\Theta  {{\phi _i}(\vec \xi){\phi _j}(\vec \xi){\vec A_k}(\vec u_M(x,t,\vec \xi);\vec \xi)}{\mathrm {d}}
{\mu(\vec \xi)}
 \in {{\mathbb R}^{N \times N}}, \quad i,j = 0, \cdots M.
$$

\begin{theorem}\label{th:symmetric}
If $\vec u_M(x,t,\vec \xi)\in {\cal G}$, then $\hat{\vec A}_0$ is real symmetric and positive definite, and $\hat{\vec A}_1$ is real symmetric, that is to say, the gPC-SG system \eqref{eq:1Ddeterministic} is symmetrically hyperbolic.
\end{theorem}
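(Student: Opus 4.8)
The plan is to reduce everything to the pointwise structure already produced by the symmetrization in Subsection~\ref{sec:1D:symm}. Recall that $\vec A_0=\vec L^{\mathrm T}\vec L$ and $\vec A_1=\vec L^{\mathrm T}\vec\Lambda\vec L$ are both real symmetric $N\times N$ matrices for almost every $\vec\xi\in\Theta$, and that $\vec A_0$ is positive definite because $\vec L$ is invertible. Since $\vec u_M(x,t,\vec\xi)\in{\cal G}$ by hypothesis, these properties persist with $\vec U$ replaced by $\vec u_M$, so the $N\times N$ integrands $\phi_i\phi_j\,\vec A_k(\vec u_M;\vec\xi)$ defining the blocks $\hat{\vec A}_{ij}^{(k)}$ are symmetric pointwise (and, for $k=0$, positive definite pointwise). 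The whole proof then amounts to transporting these two pointwise facts through the integral and the block assembly.

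First I would settle the symmetry of $\hat{\vec A}_k$ for $k=0,1$ simultaneously. Regarding $\hat{\vec A}_k$ as an $(M+1)\times(M+1)$ array of $N\times N$ blocks, its transpose has $(i,j)$-block $(\hat{\vec A}_{ji}^{(k)})^{\mathrm T}$. Transposing under the integral sign and using that the scalar factor $\phi_i\phi_j$ commutes while $\vec A_k^{\mathrm T}=\vec A_k$ pointwise gives
\[
(\hat{\vec A}_{ji}^{(k)})^{\mathrm T}=\int_\Theta \phi_j\phi_i\,\vec A_k^{\mathrm T}\,{\mathrm d}\mu=\int_\Theta \phi_i\phi_j\,\vec A_k\,{\mathrm d}\mu=\hat{\vec A}_{ij}^{(k)},
\]
so $\hat{\vec A}_k^{\mathrm T}=\hat{\vec A}_k$. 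This disposes of the symmetry of both $\hat{\vec A}_0$ and $\hat{\vec A}_1$ at once.

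It remains to prove that $\hat{\vec A}_0$ is positive definite. For an arbitrary $\hat{\vec V}=(\vec v_0,\ldots,\vec v_M)^{\mathrm T}\in\mathbb{R}^{(M+1)N}$ with blocks $\vec v_i\in\mathbb{R}^N$, I would expand the quadratic form block-by-block, pull the finite sum inside the integral, and set $\vec w(\vec\xi):=\sum_{i=0}^M\phi_i(\vec\xi)\,\vec v_i$, obtaining
\[
\hat{\vec V}^{\mathrm T}\hat{\vec A}_0\hat{\vec V}=\sum_{i,j=0}^M \vec v_i^{\mathrm T}\Big(\int_\Theta \phi_i\phi_j\,\vec A_0\,{\mathrm d}\mu\Big)\vec v_j=\int_\Theta \vec w^{\mathrm T}\,\vec A_0(\vec u_M;\vec\xi)\,\vec w\,{\mathrm d}\mu.
\]
Since $\vec A_0$ is positive definite a.e., the integrand is nonnegative, so the form is $\ge 0$, and it vanishes only if $\vec w(\vec\xi)=\vec 0$ for almost every $\vec\xi$. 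Finally, because the $\phi_i$ are orthonormal and hence linearly independent in $L^2_\mu(\Theta)$, reading $\sum_i\phi_i\vec v_i=\vec 0$ componentwise forces every $\vec v_i=\vec 0$, i.e. $\hat{\vec V}=\vec 0$. Thus $\hat{\vec A}_0$ is positive definite, and symmetric hyperbolicity of \eqref{eq:1Ddeterministic} follows. The only step needing genuine care --- and the one I expect to be the main obstacle --- is this last one: correctly reassembling the block quadratic form into the single integral of $\vec w^{\mathrm T}\vec A_0\vec w$ and then invoking linear independence of the basis to exclude a nontrivial kernel; the symmetry argument and the inheritance of pointwise properties from the symmetrization are routine bookkeeping.
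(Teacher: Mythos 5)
Your proposal is correct and follows essentially the same route as the paper's own proof: symmetry of $\hat{\vec A}_k$ by transposing block-wise under the integral and using the pointwise symmetry of $\vec A_k=\vec L^{\mathrm T}\vec L$ and $\vec L^{\mathrm T}\vec\Lambda\vec L$, and positive definiteness by reassembling the block quadratic form into $\int_\Theta \vec w^{\mathrm T}\vec A_0\vec w\,{\mathrm d}\mu$ with $\vec w=\sum_i\phi_i\vec v_i$ and invoking linear independence of the orthonormal basis. No substantive differences.
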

\begin{proof}
Both matrices $\hat{\vec A}_0$ and $\hat{\vec A}_1$ are real since $\vec u_M(x,t,\vec \xi)\in {\cal G}$.
Let $\tilde{\vec A}_{ij}^{(k)} \in \mathbb{R}^{N \times N}$ for $i,j=0,\cdots,M$ be the blocks of
the matrix  $\hat{\vec A}_k^{\mathrm{T}}$, then one has
\begin{align*}
\tilde{\vec A}_{ij}^{(k)} &= \big ( \hat{\vec A}_{ji}^{(k)} \big) ^{\mathrm{T}} = \left(  \int_\Theta  {{\phi _i}(\vec \xi){\phi _j}(\vec \xi){\vec A_k}(\vec u_M(x,t,\vec \xi);\vec \xi)}{\mathrm {d}}  {\mu(\vec \xi)}   \right ) ^{\mathrm{T}}  \\
&=   \int_\Theta  {{\phi _i}(\vec \xi){\phi _j}(\vec \xi) \vec A_k^{\mathrm{T}} (\vec u_M(x,t,\vec \xi);\vec \xi)}{\mathrm {d}}  {\mu(\vec \xi)}
=   \int_\Theta  {{\phi _i}(\vec \xi){\phi _j}(\vec \xi) \vec A_k (\vec u_M(x,t,\vec \xi);\vec \xi)}{\mathrm {d}}  {\mu(\vec \xi)}
= \hat{\vec A}_{ij}^{(k)},
\end{align*}
which implies that $\hat{\vec A}_k$  is  symmetric, $k=0,1$.

To show that $\hat{\vec A}_0$ is positive definite, consider
an arbitrary $\vec z:=( \vec z_0, \cdots, \vec z_M  )^{\mathrm{T}} \in
\mathbb{R}^{(M+1)N\times 1}$ with $\vec z_i^{\mathrm{T}} \in
\mathbb{R}^N,i=0,\cdots,M$. Then,
\begin{align*}
\vec z ^{\mathrm{T}} \hat{\vec A}_0 \vec z
& = \sum\limits_{i = 0}^M \sum\limits_{j = 0}^M { \vec z_i }   \hat{\vec A}_{ij}^{(0)}  \vec z_j^{\mathrm{T}   } \\
& = \sum\limits_{i = 0}^M \sum\limits_{j = 0}^M   \vec z_i   \left(  \int_\Theta  {{\phi _i}(\vec \xi){\phi _j}(\vec \xi) \vec A_0 (\vec u_M(x,t,\vec \xi);\vec \xi)}  {\mathrm {d}}  {\mu(\vec \xi)}   \right)  \vec z_j ^{\mathrm{T}}   \\
& = \sum\limits_{i = 0}^M \sum\limits_{j = 0}^M  {    \int_\Theta
\vec z_i
{\phi _i}(\vec \xi){\phi _j}(\vec \xi) \vec A_0 (\vec u_M(x,t,\vec \xi);\vec \xi) \vec z_j ^{\mathrm{T}}    {\mathrm {d}}  {\mu(\vec \xi)}   } \\
& = \sum\limits_{i = 0}^M \sum\limits_{j = 0}^M  {    \int_\Theta
\big( {\phi _i}(\vec \xi) \vec z_i \big)  
 \vec A_0 (\vec u_M(x,t,\vec \xi);\vec \xi) \big ( {\phi _j}(\vec \xi) \vec z_j  \big)^{\mathrm{T}}     {\mathrm {d}}  {\mu(\vec \xi)}   }
 \\
& = \int_\Theta
 \sum\limits_{i = 0}^M \sum\limits_{j = 0}^M  {
\big( {\phi _i}(\vec \xi) \vec z_i \big)  
 \vec A_0 (\vec u_M(x,t,\vec \xi);\vec \xi) \big ( {\phi _j}(\vec \xi) \vec z_j  \big) ^{\mathrm{T}}     }  {\mathrm {d}}  {\mu(\vec \xi)}  \\
 & = \int_\Theta
    {
\left( \sum\limits_{i = 0}^M  {\phi _i}(\vec \xi) \vec z_i \right) 
 \vec A_0 (\vec u_M(x,t,\vec \xi);\vec \xi) \left ( \sum\limits_{j = 0}^M {\phi _j}(\vec \xi) \vec z_j  \right)^{\mathrm{T}}      }  {\mathrm {d}}  {\mu(\vec \xi)}  \ge 0.
\end{align*}
If $\vec z ^{\mathrm{T}} \hat{\vec A}_0 \vec z =0$, then one has
$$
\sum\limits_{i = 0}^M  {\phi _i}(\vec \xi) \vec z_i^{\mathrm{T}} = \vec 0, \quad \mathrm{a.e.}
$$
Since $\{ \phi_i (\vec \xi) \}_{i \in \mathbb{N}\cup\{0\}}$ are basis polynomials, thus $\vec z_i=\vec 0$ for all $i=0,\cdots, M$, i.e. $\vec z=\vec 0$.
The proof is completed. \qed
\end{proof}

An obvious corollary of Theorem \ref{th:symmetric} is that $\hat{\vec B}:=\hat{\vec A}_0^{-1} \hat{\vec A}_1$ is real diagonalizable.

\subsection{Spatial discretization}\label{sec:1D:space}

The fact that the gPC Galerkin system \eqref{eq:1Ddeterministic} is
hyperbolic allows one to use a variety of discretization schemes in
space, e.g., the path-conservation scheme \cite{Castro2006,Pares2006,Castro2009,Castro2013}.
Here, the high-order finite volume WENO scheme given in \cite{Pares2006,Xiong2012} is considered.

For the sake of convenience, the spatial domain $\Omega$ is divided into
the uniform mesh $\left\{ x_{j+\frac{1}{2}}  =  \left( j-\frac{1}{2}\right) \Delta x
\in   \Omega  \left| {j \in \mathbb Z} \right. \right\}$, where
$\Delta x$ denotes the spatial step-size. Multiplying \eqref{eq:1Ddeterministic} by $\hat{\vec A}_0^{-1} $ from the left, and
using the higher-order, path-conservative, finite volume WENO  scheme to \eqref{eq:1Ddeterministic} for spatial discretization may give
\begin{align}\nonumber
 \frac{ {\rm d} \overline{ \hat{\vec U} }_j (t)  } { {\rm d} t }
= & -\frac{1}{\Delta x}  \bigg(
 \hat {\vec B}_{j-\frac{1}{2}}^+ \left(  \hat{\vec U}_{j-\frac{1}{2}}^+(t)  - \hat{\vec U}_{j-\frac{1}{2}}^- (t)  \right)
+ \hat {\vec B}_{j+\frac{1}{2}}^- \left(  \hat{\vec U}_{j+\frac{1}{2}}^+(t)  - \hat{\vec U}_{j+\frac{1}{2}}^- (t)
\right) \bigg)  \\
& - \sum\limits_{m =1}^q \omega_m \hat{\vec B} \big( \hat{\vec U}_j^{\rm WENO} ( x_m ^G,t ) \big) \frac{\partial \hat{\vec U}_j^{\rm WENO} }{\partial x} (x_m ^G,t) =: {\cal L} \left( \overline{ \hat{\vec U} } (t); j \right) ,
\label{eq:1Dspatial}
\end{align}
where $\overline{ \hat{\vec U} }_j (t)$ denotes the cell-averaged approximation of $\hat{\vec U} (x,t)$ over the cell $I_j:=\left[ x_{j-\frac{1}{2}} , x_{j+\frac{1}{2}}  \right]$, $\hat{\vec U}_j^{\rm WENO} ( x,t )$ is a polynomial vector function approximating $\hat{\vec U} ( x,t ) $ in the cell $I_j$ and obtained by using the WENO reconstruction from $\overline{ \hat{\vec U} }_j (t)$, $x_m ^G$ and $\omega_m$  denote the $m$th Gauss-Lobatto node transformed into the interval $I_j$ and the associated weight, respectively, and
$$
\hat{\vec U}_{j-\frac{1}{2}}^+(t) := \hat{\vec U}_j^{\rm WENO} ( x_{j-\frac{1}{2}} + 0,t ),\quad
\hat{\vec U}_{j+\frac{1}{2}}^-(t) := \hat{\vec U}_j^{\rm WENO} ( x_{j+\frac{1}{2}} - 0,t ).
$$
Here the matrix $\hat {\vec B}_{j+\frac{1}{2}}^+$ (resp.
$\hat {\vec B}_{j+\frac{1}{2}}^-$)
has  only non-positive  (resp. non-negative) eigenvalues and
is given by a suitable splitting of the intermediate matrix
\begin{align}
\hat {\vec B}_{\Psi} \left( \hat{\vec U}^-,\hat{\vec U}^+ \right)
&= \left( \sum \limits_{m = 1}^{\tilde q} \tilde \omega_m \hat {\vec A}_0 \big( \vec \Psi \left( s_m ,  \hat{\vec U}^-,\hat{\vec U}^+   \right) \big) \right)^{-1}
\left( \sum \limits_{m = 1}^{\tilde q} \tilde \omega_m \hat {\vec A}_1 \big( \vec \Psi \left( s_m ,  \hat{\vec U}^-,\hat{\vec U}^+   \right) \big) \right),
\label{eq:averageMatrix}\end{align}
with $\vec \Psi \left( s_m ,  \hat{\vec U}^-,\hat{\vec U}^+   \right) :=  \hat{\vec U}^- + s_m \left( \hat{\vec U}^+ -  \hat{\vec U}^- \right)$ and $s_m\in[0,1]$,
that is
$$
 \hat {\vec B}_{\Psi} \left( \hat{\vec U}_{j+\frac{1}{2}}^-,\hat{\vec U}_{j+\frac{1}{2}}^+ \right) = \hat {\vec B}_{j+\frac{1}{2}}^- + \hat {\vec B}_{j+\frac{1}{2}}^+,
$$

Here, the Lax-Friedrichs type splitting is employed, i.e.,
\begin{equation}
\label{eq:LFsplit}
\hat {\vec B}_{j+\frac{1}{2}}^\pm = \frac{1}{2} \Big(  \hat {\vec B}_{\Psi} \left( \hat{\vec U}_{j+\frac{1}{2}}^-,\hat{\vec U}_{j+\frac{1}{2}}^+ \right) \pm \alpha_{j+\frac{1}{2}} {\vec I}_{(M+1)N}  \Big),
\end{equation}
where  ${\vec I}_{(M+1)N}$ denotes the identity matrix of size $(M+1)N$,
and the coefficient
$\alpha_{j+\frac{1}{2}}$ is not less than the spectral radius of the intermediate matrix $\hat {\vec B}_{\Psi}$, which may be estimated by using
the eigenvalues of the Jacobian matrix
$\vec A(\vec U; \vec \xi)$ of \eqref{eq:1D},
see  Theorem \ref{Thm2.2} given below.

Practically, the intermediate matrix $\hat {\vec B}_{\Psi} \left( \hat{\vec U}^-,\hat{\vec U}^+ \right)$ is an approximation of the matrix
\begin{align} \nonumber
\left( \int_0^1{ \hat {\vec A}_0 \big( \vec \Psi \left( s ,  \hat{\vec U}^-,\hat{\vec U}^+   \right) \big) } {\rm d} s \right)^{-1}
\left( \int_0^1{ \hat {\vec A}_1 \big( \vec \Psi \left( s ,  \hat{\vec U}^-,\hat{\vec U}^+   \right) \big) } {\rm d} s \right),
\end{align}
by using the Gaussian quadrature along
 the integral path $\vec \Psi \left( s ,  \hat{\vec U}^-,\hat{\vec U}^+   \right) =  \hat{\vec U}^- + s \left( \hat{\vec U}^+ -  \hat{\vec U}^- \right)$,
 $s \in [0,1]$, which is the segment between $\hat{\vec U}^-$ and $\hat{\vec U}^+$,
and  $s_m $ and $\tilde \omega_m $  denote the $m$th Gaussian node transformed into the interval $[0,1]$ and the associated weight, respectively.
It is not difficult to prove by using Theorem \ref{th:symmetric} that the intermediate matrix $\hat {\vec B}_{\Psi} \left( \hat{\vec U}^-,\hat{\vec U}^+ \right)$ is real diagonalizable if $\vec \Psi \left( s_m ,  \hat{\vec U}^-,\hat{\vec U}^+   \right),~m=1,\cdots,\tilde q$, are admissible, i.e.
\begin{equation}
\vec u_M^{(m)}(\vec \xi) = \sum\limits_{i = 0}^M
\big({{\vec {\hat u}_i^{(m)}}{\phi _i}(\vec \xi)} \big)^{\mathrm{T}}\in {\cal G}, \quad \forall \vec \xi \in \Theta,
\label{eq-uMm}
\end{equation}
where $ \left( \vec {\hat u}_0^{(m)}, \cdots,  \vec {\hat u}_M^{(m)}  \right)^{\mathrm{T}} :=
\vec \Psi \left( s_m ,  \hat{\vec U}^-,\hat{\vec U}^+   \right)$.



Before estimating the upper bound of the spectral radius of $\hat {\vec B}_{\Psi}$, 
we first  prove a lemma.

\begin{lemma}\label{lam:A0A1}
If  $\vec {\cal A}_0$ is a real symmetric and positive-definite matrix, and $\vec {\cal A}_1$ is a real symmetric matrix, then $\lambda \vec {\cal A}_0 \pm \vec {\cal A}_1 $ is positive semi-definite if and only if
 $\lambda \ge \varrho\left( \vec {\cal A}_0^{-1} \vec {\cal A}_1 \right)$, where $\varrho\left( \vec {\cal A}_0^{-1} \vec {\cal A}_1 \right)$ denotes the spectral radius of the matrix  $\vec {\cal A}_0^{-1} \vec {\cal A}_1$, and $\lambda$ is a real number.
\end{lemma}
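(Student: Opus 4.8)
The plan is to reduce the matrix-pencil condition to an ordinary symmetric eigenvalue problem by exploiting the positive definiteness of $\vec {\cal A}_0$. Since $\vec {\cal A}_0$ is real symmetric and positive definite, it admits a symmetric positive-definite square root $\vec {\cal A}_0^{1/2}$, which is invertible; write $\vec {\cal A}_0^{-1/2}$ for its inverse. First I would introduce the real symmetric matrix $\vec {\cal S} := \vec {\cal A}_0^{-1/2}\vec {\cal A}_1\vec {\cal A}_0^{-1/2}$ and observe that
\begin{equation*}
\vec {\cal A}_0^{-1}\vec {\cal A}_1 = \vec {\cal A}_0^{-1/2}\,\vec {\cal S}\,\vec {\cal A}_0^{1/2},
\end{equation*}
so that $\vec {\cal A}_0^{-1}\vec {\cal A}_1$ is similar to $\vec {\cal S}$. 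Consequently the eigenvalues of $\vec {\cal A}_0^{-1}\vec {\cal A}_1$ are exactly those of $\vec {\cal S}$; in particular they are all real, and $\varrho(\vec {\cal A}_0^{-1}\vec {\cal A}_1) = \varrho(\vec {\cal S}) = \max_\ell |\sigma_\ell|$, where $\{\sigma_\ell\}$ denotes the real spectrum of $\vec {\cal S}$.

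Next I would transfer the semidefiniteness statement through the same congruence. For an arbitrary $\vec x$, set $\vec y := \vec {\cal A}_0^{1/2}\vec x$; since $\vec {\cal A}_0^{1/2}$ is invertible, $\vec y$ ranges over the whole space as $\vec x$ does. Using $\vec x^{\mathrm T}\vec {\cal A}_0\vec x = \vec y^{\mathrm T}\vec y$ and $\vec x^{\mathrm T}\vec {\cal A}_1\vec x = \vec y^{\mathrm T}\vec {\cal S}\vec y$, a direct computation gives
\begin{equation*}
\vec x^{\mathrm T}\big(\lambda \vec {\cal A}_0 \pm \vec {\cal A}_1\big)\vec x
= \lambda\,\vec y^{\mathrm T}\vec y \pm \vec y^{\mathrm T}\vec {\cal S}\,\vec y
= \vec y^{\mathrm T}\big(\lambda \vec I \pm \vec {\cal S}\big)\vec y.
\end{equation*}
Hence $\lambda\vec {\cal A}_0 \pm \vec {\cal A}_1$ is positive semi-definite if and only if the symmetric matrix $\lambda\vec I \pm \vec {\cal S}$ is positive semi-definite.

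Finally, since $\vec {\cal S}$ is symmetric, $\lambda\vec I \pm \vec {\cal S}$ is positive semi-definite exactly when all of its eigenvalues $\lambda \pm \sigma_\ell$ are nonnegative. The condition for $\lambda\vec I + \vec {\cal S}$ reads $\lambda \geq -\sigma_\ell$ for every $\ell$, and that for $\lambda\vec I - \vec {\cal S}$ reads $\lambda \geq \sigma_\ell$ for every $\ell$; requiring both (as the $\pm$ indicates) is equivalent to $\lambda \geq \max_\ell|\sigma_\ell| = \varrho(\vec {\cal A}_0^{-1}\vec {\cal A}_1)$, which is the claim. I do not anticipate a serious obstacle here; the only point needing care is the first step, namely recognizing that although $\vec {\cal A}_0^{-1}\vec {\cal A}_1$ is generally not symmetric, its similarity to $\vec {\cal S}$ guarantees a real spectrum and lets the spectral radius of this nonsymmetric product be read off from the symmetric matrix that governs the definiteness.
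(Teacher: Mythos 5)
Your argument is correct and follows essentially the same route as the paper: the paper also conjugates $\lambda\vec{\cal A}_0\pm\vec{\cal A}_1$ by $\vec{\cal A}_0^{1/2}$ to reduce to the positive semi-definiteness of $\lambda\vec I\pm\vec{\cal A}_0^{-1/2}\vec{\cal A}_1\vec{\cal A}_0^{-1/2}$, which is symmetric and similar to $\vec{\cal A}_0^{-1}\vec{\cal A}_1$. Your write-up is in fact slightly more explicit than the paper's (which invokes its Theorem 2.1 for the real diagonalizability and leaves the final spectral step terse), but the decomposition and the key congruence are identical.
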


\begin{proof}
Because
$
\lambda \vec {\cal A}_0 \pm \vec {\cal A}_1 = \left( \vec {\cal A}_0^{\frac{1}{2}} \right)^T \left( \lambda \vec I \pm \vec {\cal A}_0^{\frac{1}{2}}
\vec {\cal A}_0^{-1} \vec {\cal A}_1 \vec {\cal A}_0^{-\frac{1}{2}} \right) \vec {\cal A}_0^{\frac{1}{2}},
$
where $\vec I$ is the identity matrix,
 $\lambda \vec {\cal A}_0 \pm \vec {\cal A}_1$ is  congruent to $\lambda \vec I \pm \vec {\cal A}_0^{\frac{1}{2}}
\vec {\cal A}_0^{-1} \vec {\cal A}_1 \vec {\cal A}_0^{-\frac{1}{2}}$.
The hypothesis and Theorem \ref{th:symmetric} imply that
$\vec {\cal A}_0^{-1} \vec {\cal A}_1$ is real diagonalizable.
Thus $\lambda \vec {\cal A}_0 \pm \vec {\cal A}_1 $ is positive semi-definite if and only if $\lambda \vec I \pm \vec {\cal A}_0^{\frac{1}{2}}
\vec {\cal A}_0^{-1} \vec {\cal A}_1 \vec {\cal A}_0^{-\frac{1}{2}}$ is positive semi-definite, equivalently, $\lambda \ge \varrho \left( \vec {\cal A}_0^{\frac{1}{2}}
\vec {\cal A}_0^{-1} \vec {\cal A}_1 \vec {\cal A}_0^{-\frac{1}{2}}\right) =  \varrho \left(
\vec {\cal A}_0^{-1} \vec {\cal A}_1 \right) $. The proof is completed. \qed
\end{proof}

\begin{theorem}\label{Thm2.2}
If 
$$
\vec \Psi \left( s_m ,  \hat{\vec U}^-,\hat{\vec U}^+   \right) \in
{\hat{\cal G}_M:=}
 \left\{ \hat{\vec U} := ( \vec {\hat u}_0, \cdots,  \vec {\hat u}_M  )^{\mathrm{T}} \in \mathbb{R}^{(M+1)N} \left| ~ \sum\limits_{i = 0}^M
{ \big({{\vec {\hat u}_i }{\phi _i}(\vec \xi)} \big)^{\mathrm{T}} }
  \in {\cal G},~\forall \vec \xi \in \Theta \right. \right\},
$$
for $m=1,\cdots,\tilde q$,
  then
the spectral radius of the intermediate matrix $\hat {\vec B}_{\Psi}$ satisfies
\begin{equation}\label{eq:upperbound-2}
\alpha  :=  \max\limits_{\ell,m} \sup \limits_{\vec \xi \in \Theta} \big\{ \left| \lambda_\ell \left(\vec u_M^{(m)}(\vec \xi);\vec \xi  \right) \right| \big\}
 \ge \varrho\left( \hat {\vec B}_{\Psi} \right),
\end{equation}
where
 $\lambda_\ell$ is the $\ell$--th eigenvalue of the Jacobian matrix
$\vec A(\vec U; \vec \xi)$ of \eqref{eq:1D}, $\ell=1,2,\cdots,N$.
\end{theorem}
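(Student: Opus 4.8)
The plan is to reduce the spectral-radius bound \eqref{eq:upperbound-2} to a semi-definiteness statement via Lemma \ref{lam:A0A1}, and then to transfer the scalar eigenvalue bound on the original Jacobian $\vec A$ to the large coupled matrices through the symmetrizer factorization $\alpha\vec A_0\pm\vec A_1=\vec L^{\mathrm T}(\alpha\vec I\pm\vec\Lambda)\vec L$ established in Subsection \ref{sec:1D:symm}. First I would introduce the two quadrature-averaged matrices
$$
\vec{\cal A}_0:=\sum_{m=1}^{\tilde q}\tilde\omega_m\,\hat{\vec A}_0\big(\vec\Psi(s_m,\hat{\vec U}^-,\hat{\vec U}^+)\big),\qquad
\vec{\cal A}_1:=\sum_{m=1}^{\tilde q}\tilde\omega_m\,\hat{\vec A}_1\big(\vec\Psi(s_m,\hat{\vec U}^-,\hat{\vec U}^+)\big),
$$
so that $\hat{\vec B}_\Psi=\vec{\cal A}_0^{-1}\vec{\cal A}_1$ by \eqref{eq:averageMatrix}. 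Since each node $\vec\Psi(s_m,\hat{\vec U}^-,\hat{\vec U}^+)\in\hat{\cal G}_M$ by hypothesis, the associated state $\vec u_M^{(m)}$ is admissible, so Theorem \ref{th:symmetric} makes every $\hat{\vec A}_0(\vec\Psi(s_m))$ symmetric positive definite and every $\hat{\vec A}_1(\vec\Psi(s_m))$ symmetric. As the Gaussian weights $\tilde\omega_m$ are positive, $\vec{\cal A}_0$ is symmetric positive definite and $\vec{\cal A}_1$ is symmetric. Lemma \ref{lam:A0A1} then turns the target inequality $\alpha\ge\varrho(\hat{\vec B}_\Psi)=\varrho(\vec{\cal A}_0^{-1}\vec{\cal A}_1)$ into the claim that both $\alpha\vec{\cal A}_0\pm\vec{\cal A}_1$ are positive semi-definite.

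Because the weights are positive, it would suffice to show that each summand $\alpha\hat{\vec A}_0(\vec\Psi(s_m))\pm\hat{\vec A}_1(\vec\Psi(s_m))$ is positive semi-definite. For this I would repeat the quadratic-form computation of Theorem \ref{th:symmetric} verbatim: for any $\vec z=(\vec z_0,\cdots,\vec z_M)^{\mathrm T}$,
$$
\vec z^{\mathrm T}\big(\alpha\hat{\vec A}_0(\vec\Psi(s_m))\pm\hat{\vec A}_1(\vec\Psi(s_m))\big)\vec z
=\int_\Theta\Big(\sum_{i=0}^M\phi_i(\vec\xi)\vec z_i\Big)\big(\alpha\vec A_0\pm\vec A_1\big)(\vec u_M^{(m)};\vec\xi)\Big(\sum_{j=0}^M\phi_j(\vec\xi)\vec z_j\Big)^{\mathrm T}{\mathrm d}\mu(\vec\xi),
$$
so nonnegativity of the whole form follows once the integrand is shown to be pointwise nonnegative.

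The heart of the argument is that integrand. Using $\vec A_0=\vec L^{\mathrm T}\vec L$ and $\vec A_1=\vec L^{\mathrm T}\vec\Lambda\vec L$, I would factor $\alpha\vec A_0\pm\vec A_1=\vec L^{\mathrm T}(\alpha\vec I\pm\vec\Lambda)\vec L$. The definition of $\alpha$ in \eqref{eq:upperbound-2} guarantees $|\lambda_\ell(\vec u_M^{(m)}(\vec\xi);\vec\xi)|\le\alpha$ for every $\ell$ and almost every $\vec\xi$, so the diagonal matrix $\alpha\vec I\pm\vec\Lambda$ has nonnegative entries $\alpha\pm\lambda_\ell\ge0$ and is positive semi-definite; congruence through $\vec L$ makes $\alpha\vec A_0\pm\vec A_1$ positive semi-definite for a.e.\ $\vec\xi$. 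Hence the integrand is nonnegative, the quadratic form is nonnegative, each summand and therefore $\alpha\vec{\cal A}_0\pm\vec{\cal A}_1$ is positive semi-definite, and Lemma \ref{lam:A0A1} yields \eqref{eq:upperbound-2}. I expect the only delicate point to be the passage from the scalar spectral bound on the small matrix $\vec A$ to matrix positivity of the large coupled system; this is precisely what the symmetrizer factorization $\vec L^{\mathrm T}(\alpha\vec I\pm\vec\Lambda)\vec L$ accomplishes, reducing both the Gauss quadrature over $s$ and the Galerkin averaging over $\vec\xi$ to sign-preserving operations (positive weights and integration against a nonnegative integrand).
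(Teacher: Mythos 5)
Your proposal is correct and follows essentially the same route as the paper: reduce \eqref{eq:upperbound-2} via Lemma \ref{lam:A0A1} to positive semi-definiteness of $\alpha\hat{\vec A}_0^{\Psi}\pm\hat{\vec A}_1^{\Psi}$, then use the quadratic-form identity from the proof of Theorem \ref{th:symmetric} to reduce that to pointwise semi-definiteness of $\alpha\vec A_0\pm\vec A_1$, which the definition of $\alpha$ guarantees. Your explicit congruence $\alpha\vec A_0\pm\vec A_1=\vec L^{\mathrm T}(\alpha\vec I\pm\vec\Lambda)\vec L$ is just a slightly more direct way of stating what the paper obtains by invoking Lemma \ref{lam:A0A1} at the level of the small matrices.
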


\begin{proof}
Under the hypothesis,
it is easy to show that $\vec u_M^{(m)}(\vec \xi)$ defined in \eqref{eq-uMm} belongs to ${\cal G}$, for all $\vec \xi \in \Theta$. Thanks to Theorem \ref{th:symmetric},  $\hat {\vec A}_0 \left( \vec \Psi \left( s_m ,  \hat{\vec U}^-,\hat{\vec U}^+   \right) \right)$ is real symmetric and positive definite, and $\hat {\vec A}_1 \left( \vec \Psi \left( s_m ,  \hat{\vec U}^-,\hat{\vec U}^+   \right) \right)$ is real symmetric. Because
$$
\hat {\vec A}_{k}^\Psi := \sum \limits_{m = 1}^{\tilde q} \tilde \omega_m \hat {\vec A}_k \big( \vec \Psi \left( s_m ,  \hat{\vec U}^-,\hat{\vec U}^+   \right) \big),
\
k=0,1,
$$
are two convex combinations of $\hat {\vec A}_k \left( \vec \Psi \left( s_m ,  \hat{\vec U}^-,\hat{\vec U}^+   \right) \right),~m=1,\cdots,\tilde q$,
 $\hat {\vec A}_{0}^{\Psi}$ is real symmetric and positive definite, and $\hat {\vec A}_{1}^{\Psi}$ is real symmetric.
Due to Lemma \ref{lam:A0A1}, the inequality \eqref{eq:upperbound-2}
is equivalent to  the positive  semi-definiteness of the matrix $\alpha \hat {\vec A}_{0}^{\Psi} \pm  \hat {\vec A}_{1}^{\Psi}$, which is a block matrix of the form
$$
\alpha \hat {\vec A}_{0}^{\Psi} \pm  \hat {\vec A}_{1}^{\Psi} =\left( {\begin{array}{*{20}{c}}
   {\vec A}_{00}^{\Psi} &  \cdots  & {\vec A}_{0M}^{\Psi}  \\
    \vdots  & {} &  \vdots   \\
   {\vec A}_{M0}^{\Psi} &  \cdots  & {\vec A}_{MM}^{\Psi}
\end{array}} \right),
$$
where the blocks
$$
{\vec A}_{ij}^{\Psi} = \sum \limits_{m = 1}^{\tilde q} \tilde \omega_m \int_\Theta  {{\phi _i}(\vec \xi){\phi _j}(\vec \xi)
\left(
\alpha
{\vec A_0}(\vec u_M^{(m)}(\vec \xi);\vec \xi)
\pm {\vec A_1}(\vec u_M^{(m)}(\vec \xi);\vec \xi)
\right)
}
{\mathrm {d}} {\mu(\vec \xi)} \in {{\mathbb R}^{N \times N}}.
$$
The matrix $\alpha
{\vec A_0}(\vec u_M^{(m)}(\vec \xi);\vec \xi)
\pm {\vec A_1}(\vec u_M^{(m)}(\vec \xi);\vec \xi)$ is positive semi-definite for all $\vec \xi \in \Theta$ due to the definition of $\alpha$
and Lemma \ref{lam:A0A1}.
 Then,  the positive semi-definiteness of $\alpha \hat {\vec A}_{0}^{\Psi} \pm  \hat {\vec A}_{1}^{\Psi}$ may be concluded by following the proof of Theorem \ref{th:symmetric}. The proof is completed. \qed
\end{proof}

Theorem \ref{Thm2.2} indicates that the complicate  calculation of the spectral radius of the large matrix $\hat {\vec B}_{\Psi}$ may be avoided in the Lax-Friedrichs type splitting \eqref{eq:LFsplit} and the
CFL condition for determining the time step-size in practical computations.

\begin{remark} \label{rem:limiter}
In our computations, the parameters $q$ in \eqref{eq:1Dspatial} and $\tilde q$ in \eqref{eq:averageMatrix} is taken as $4$ and 3, respectively.
 The function $\hat{\vec U}_j^{\rm WENO} ( x,t )$ is
derived by using the Lagrange interpolation based on $\left\{ \hat{\vec U}_j^{\rm WENO} ( x_m ^G,t ) \right\}_{m=1}^q$, which
are obtained by the fifth-order accurate WENO reconstruction from $\{\overline{ \hat{\vec U} }_j (t)\}$.
If $\overline{ \hat{\vec U} }_j (t) \in \hat {\cal G}_M$  but $\hat{\vec U}_j^{\rm WENO} ( x_m ^G,t )\notin \hat {\cal G}_M$ for at least one
Gauss-Lobatto point $x_m ^G$, then
$\hat{\vec U}_j^{\rm WENO} ( x_m ^G,t )$ is limited
by the limiting procedure
$$
\widetilde {\vec U}_j^{\rm WENO} ( x_m ^G,t ) = \theta \left(  \hat {\vec U}_j^{\rm WENO} ( x_m ^G,t ) - \overline{ \hat{\vec U} }_j (t) \right) + \overline{ \hat{\vec U} }_j (t), \quad m=1,\cdots,q,
$$
where $\theta = \min \limits_{1\le m \le q} \{\theta_m\}$, and  $\theta_m= 1$ for $\hat{\vec U}_j^{\rm WENO} ( x_m ^G,t ) {\in} \hat {\cal G}_M$; otherwise, $\theta_m$ corresponds to the intersection point between the line segment $\{\vec \Psi \left( s ,  \overline{ \hat{\vec U} }_j (t) ,  \hat {\vec U}_j^{\rm WENO} ( x_m ^G,t )   \right), s\in [0,1]\}$ and the boundary of $\hat{\cal G}_M$.
If the admissible state set $\cal G$ is convex, e.g. for the Euler equations \cite{zhang2010} or the relativistic hydrodynamical equations \cite{WuTang2015},
then $\hat{\cal G}_M$ is also convex, and thus  the intersection point is unique.
However, for the system \eqref{eq:1D}, the above gPC-SG method cannot generally preserve
the property that $\overline{ \hat{\vec U} }_j (t) \in \hat {\cal G}_M$.
Example \ref{example:Sod} in Section \ref{sec:experiments} will show
 that the solutions $\overline{ \hat{\vec U} }_j (t) \notin \hat {\cal G}_M$
  may appear for very few cells in few time steps.
For this case,  $\overline{ \hat{\vec U} }_j (t)=:( \overline{ \vec {\hat u}}_0, \cdots,  \overline {\vec {\hat u}}_M  )^{\mathrm{T}}$ may be further limited as $\widetilde{ \hat{\vec U} }_j (t;\tilde\theta)=( \overline{ \vec {\hat u}}_0, \tilde\theta \overline {\vec {\hat u}}_1 ,\cdots, \tilde\theta \overline {\vec {\hat u}}_M  )^{\mathrm{T}},$
where $\tilde \theta$ corresponds to the intersection point between
 the line segment $\{\vec \Psi \left( s , \widetilde{ \hat{\vec U} }_j (t;0) , \overline{ \hat{\vec U} }_j (t)   \right), s \in [0,1] \}$ and the boundary of $\hat{\cal G}_M$.
\end{remark}

\subsection{Time discretization}\label{sec:1D:time}

The time derivatives in the semi-discrete system \eqref{eq:1Dspatial}
 can be approximated  any proper method, e.g., the explicit total
 variation diminishing Runge-Kutta method \cite{Shu1988}.
Here, the third-order
accurate version is considered, as an example. It takes the following form
\begin{align} \label{eq:RK1} \begin{aligned}
& \hat {\vec U}^ *_j   = \hat {\vec U}^n_j  + \Delta t_n {\cal L}( \hat {\vec U}^n;j ), \\
& \hat {\vec U}^{ *  * }_j  = \frac{3}{4} \hat {\vec U}^n_j  + \frac{1}{4}\Big( \hat {\vec U}^ *_j
 + \Delta t_n {\cal L}(\hat {\vec U}^ *;j  )\Big), \\
& \hat {\vec U}^{n+1}_j  = \frac{1}{3} \hat {\vec U}^n_j  + \frac{2}{3}\Big( \hat {\vec U}^{ *  * }_j
+ \Delta t_n {\cal L}( \hat {\vec U}^{ *  * };j)\Big),
\end{aligned}\end{align}
where $ \Delta t_n$ denotes the time step-size.


\section{Two-dimensional gPC-SG method}\label{sec:2D}

This section extends the above gPC-SG method to a general
 two-dimensional (2D) quasilinear hyperbolic system
\begin{equation}\label{eq:2D}
\frac{ \partial } {\partial t}  \vec U(x,y,t,\vec \xi)  +  \frac{ \partial } {\partial x} \vec F(\vec U(x,y,t,\vec \xi);\vec \xi )
+ \frac{ \partial } {\partial y} \vec G(\vec U(x,y,t,\vec \xi);\vec \xi )
= \vec 0, \quad  (x,y) \in \Omega,~  t >0,
\end{equation}
If the eigenvector matrices of the Jacobian matrices in $x$- and $y$-directions are different,
the general  system \eqref{eq:2D} can not be symmetrized
  with the approach used in  Section \ref{sec:1D:symm}.
To derive an efficient gPC-SG method for the general system \eqref{eq:2D},
 the operator splitting technique is employed here,
which in essence reduces the multi-dimensional problem
to a sequence of augmented 1D problems. For example,
the 2D system \eqref{eq:2D}
is decomposed into two subsystems (the $x$- and $y$-split 2D systems)
as follows
\begin{equation}\label{eq:2Dx}
\frac{ \partial } {\partial t}  \vec U(x,y,t,\vec \xi)  +  \frac{ \partial } {\partial x} \vec F(\vec U(x,y,t,\vec \xi);\vec \xi )
= \vec 0,
\end{equation}
and
\begin{equation}\label{eq:2Dy}
\frac{ \partial } {\partial t}  \vec U(x,y,t,\vec \xi)
+ \frac{ \partial } {\partial y} \vec G(\vec U(x,y,t,\vec \xi);\vec \xi )
= \vec 0.
\end{equation}
For the splited sub-systems, the approach presented in Section \ref{sec:1D} can be used to obtain a deterministic symmetrically hyperbolic system for the gPC expansion coefficients.

 Assuming that the 2D spatial domain $\Omega$ is
 divided into the uniform rectangular mesh
 $\left\{ \left(x_{j+\frac{1}{2}}  =  \left(j+\frac{1}{2}\right) \Delta x, y_{k+\frac{1}{2}}= \left( k+\frac{1}{2} \right) \Delta y \right)
\in   \Omega  \left| {j,k \in \mathbb Z} \right. \right\}$,
and  the cell-averaged approximation of the ``initial'' gPC expansion coefficients $\overline{ \hat{\vec U} }_{j k}^n$
over the cell $\left[ x_{j-\frac{1}{2}} , x_{j+\frac{1}{2}}  \right]\times \left[ y_{k-\frac{1}{2}} , y_{k+\frac{1}{2}}  \right] $
at $t=t_n$ are given,
then the cell-averaged approximation of the expansion coefficients at $t=t_n+\Delta t_n$ can approximately calculated based on
some higher-order accurate operator splitting method, e.g.
a third-order accurate operator splitting method \cite{Sornborger1999,Thalhammer2009}
\begin{equation}\label{eq:split2Dx}
\overline{ \hat{\vec U} }_{j k}^{n+1}  = {\cal E}_x^{\tau _1 }
{\cal E}_y^{\tau _1  + \tau _2 }
{\cal E}_x^{\tau _2 }
{\cal E}_y^{\tau _3 }
{\cal E}_x^{\tau _3  + \tau _4 }
{\cal E}_y^{\tau _4 } \overline{ \hat{\vec U} }_{j k}^n,
\end{equation}
or
\begin{equation}\label{eq:split2Dy}
\overline{ \hat{\vec U} }_{j k}^{n+1}  = {\cal E}_y^{\tau _1 }
{\cal E}_x^{\tau _1  + \tau _2 }
{\cal E}_y^{\tau _2 }
{\cal E}_x^{\tau _3 }
{\cal E}_y^{\tau _3  + \tau _4 }
{\cal E}_x^{\tau _4 } \overline{ \hat{\vec U} }_{j k}^n ,
\end{equation}
where ${\cal E}_x^\tau$ and ${\cal E}_y^\tau$ denote the 1D finite volume WENO scheme for the deterministic systems corresponding to the split systems
 \eqref{eq:2Dx}
and \eqref{eq:2Dy}, respectively, and the ``time step-sizes'' $\tau_i$ are
\begin{align*}
   & \tau _1  = \frac{{2\Delta t_n}}{{5 - \sqrt {13}  + \sqrt {2(1 + \sqrt {13} )} }}, \ \tau _2  = \frac{{7 + \sqrt {13}  - \sqrt {2(1 + \sqrt {13} )} }}{{12}}\Delta t_n, \\
   & \tau _3  = \frac{{\tau _1^2 }}{{\tau _2  - \tau _1 }}, \qquad \tau _4  = \Delta t_n - \left( {\tau _1  + \tau _2  + \tau _3 } \right).
\end{align*}



\section{Numerical experiments}
\label{sec:experiments}

This section presents several numerical examples to verify the accuracy and effectiveness of the proposed gPC-SG methods.
The quasilinear system of hyperbolic conservation laws are taken as the 1D  Euler equations
\begin{equation}\label{eq:1DEuler}
\frac{\partial \vec U}{\partial t} + \frac{\partial \vec F ( \vec U )}{\partial x} = 0,
\end{equation}
with
\[
\vec U = (\rho ,\rho u,  E)^{\rm T} , \ \ \vec F  = (\rho u,\rho u^2  + p,  uE + up)^{\rm T},
\]
and the 2D  Euler equations
\begin{equation}\label{eq:2DEuler}
\frac{\partial \vec U}{\partial t} + \frac{\partial \vec F_1 ( \vec U )}{\partial x} +  \frac{\partial \vec F_2 ( \vec U )}{\partial y}= 0,
\end{equation}
with
\[
\vec U = (\rho ,\rho u, \rho v, E)^{\rm T} , \  \vec F_1  = (\rho u,\rho u^2  + p,\rho u v, uE + up)^{\rm T},\
\vec F_2  = (\rho v,\rho u v, \rho v^2  + p, vE + vp)^{\rm T}.
\]
Here $\rho,u,v, p$, and $E$ denote the density,  the velocity
components in $x$- and $y$-directions, the pressure, and the total
energy, respectively. In 1D, $E=\rho e + \frac{1}{2}\rho u^2 $; and in 2D,
$E=\rho e + \frac{1}{2} \rho (u^2 + v^2)$, where $e$ is the
internal energy.
An equation of state is needed to close the system \eqref{eq:1DEuler}
or \eqref{eq:2DEuler}.
We focus on the case for the ideal gases, i.e.
\begin{equation}
  \label{eq:GammaLaw}
  p = (\Gamma-1)\rho e,
\end{equation}
where $\Gamma$ is the adiabatic index. In the numerical experiments, assume  that the uncertainty may
enter the problems through the initial or boundary conditions or the adiabatic index in the equation of state, more specifically,
the adiabatic index or the initial $\rho,u,v$ or $p$ or boundary condition depends on a 1D random
variable $\xi$, which is assumed to obey the uniform distribution on $[-1,1]$ for simplicity.
The Legendre polynomials are taken as the gPC basis, and thus the
mean and variance of the gPC solution $\vec u_M$ in \eqref{eq:gPCAPP} are respectively given by
$$
{\mathbb{E}} [\vec u_M] = \hat{\vec u}_0,\quad {\rm Var} [\vec u_M] = \sum \limits_{i=1}^M  \hat{\vec u}_M^2 ,
$$
and the corresponding standard deviation $\sigma [\vec u_M] = \sqrt{{\rm Var} [\vec u_M]}$.
Unless specifically stated, all computations will use the CFL number
of 0.6 in the finite volume WENO schemes for the deterministic gPC-SG
systems. Due to the complex nature of the solution in physical space,
we focus exclusively on the $d=1$ case in random space. This allows us
to fully resolve the solution in random space. Extension to multiple
random variables is straightforward and poses no numerical
difficulty. It merely increases the simulation time significantly.

\subsection{1D case}

\begin{example}[Smooth problem]\label{example:accuracy}\rm
This example is used to check the accuracy of the gPC-SG method for
smooth solution
  \begin{equation*}
    (\rho,u,p) (x,t,\xi) = \Big(1+0.2\sin\big(2\pi(x-(0.8+0.2 \xi)t)\big),0.8+0.2 \xi,1   \Big),
  \end{equation*}
with randomness, which describes a sine wave propagating periodically within
  the spatial domain $[0,1]$ with uncertain velocity.

  The spatial domain is divided into $N_c$ uniform cells and the
  periodic boundary conditions are specified.
  The time step is taken as $\Delta t_n = \Delta x^{\frac{5}{3}}$ in order to realize fifth-order accuracy in time in the present case.
  The $l^1$-errors in the
mean and standard deviation of the density at $t=0.2$ obtained by the gPC-SG method with $N_c=320$ uniform cells and different gPC orders $M$ are plotted in Fig. \ref{fig:gPCM}.
  The fast exponential convergence with respect to the order of gPC expansion is observed both in mean and standard deviation.
  The errors saturate at modest gPC orders, because the spatial and time discretization errors become dominant at this stage.
  Table \ref{tab:order} lists the $l^1$-errors at $t=0.2$ in the
mean and standard deviation of the density and corresponding convergence rates for
  the gPC-SG method with $M=4$ and different $N_c$.
  The results show that the convergence rate of fifth-order
   can be almost obtained in space and time.
\end{example}

\begin{figure}[htbp]
  \centering
  \includegraphics[height=0.48\textwidth]{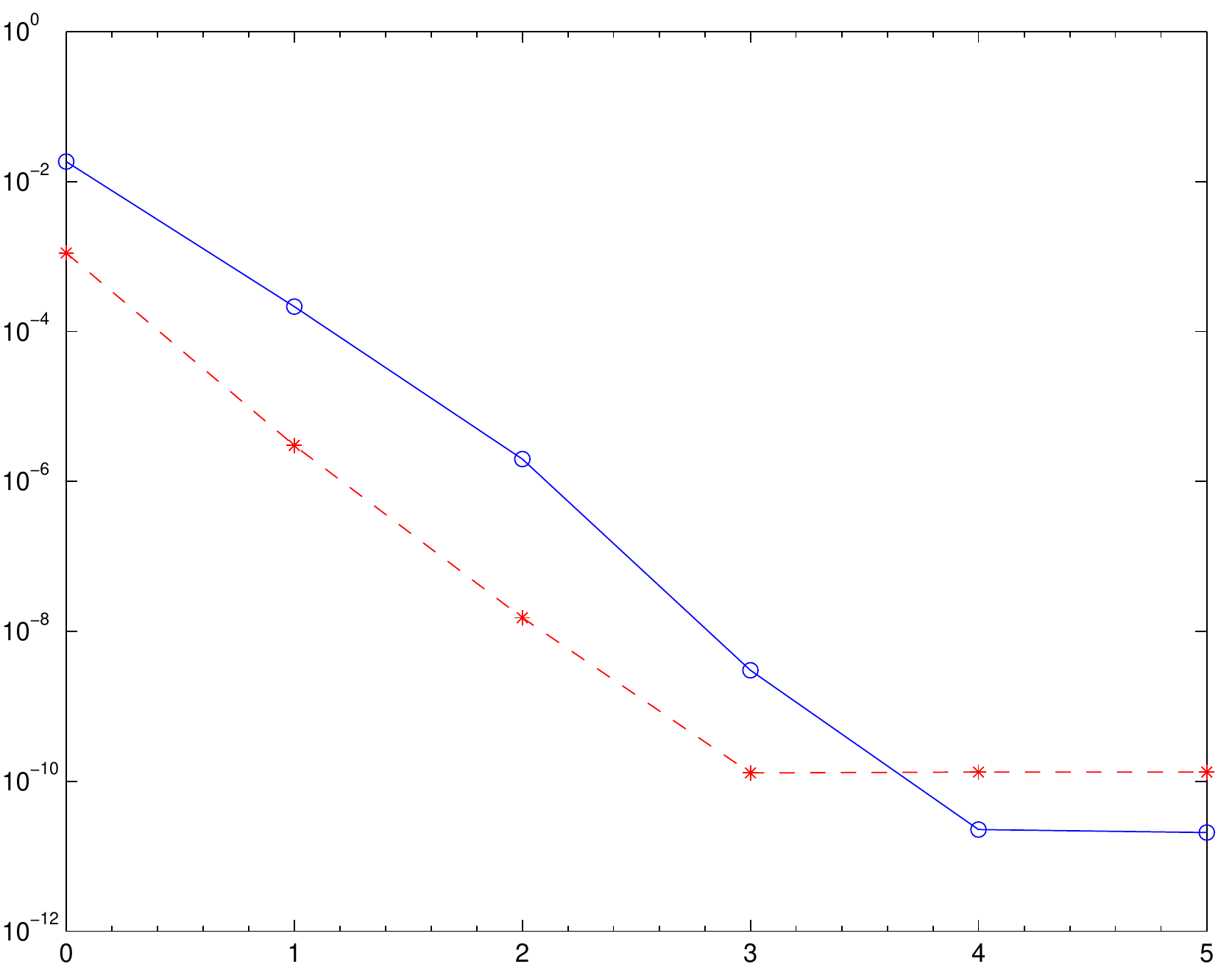}
  \caption{\small Example \ref{example:accuracy}: $l^1$-errors at $t=0.2$ in mean (dashed lines with symbols ``${\color{red}*}$'') and standard deviation (solid lines with symbols ``{\color{blue}$\circ$}'') of the density, with respect to gPC order $M$ for the gPC-SG method with $320$ unform cells.
 }
  \label{fig:gPCM}
\end{figure}

\begin{table}[htbp]
  \centering
  \caption{\small Example \ref{example:accuracy}:
   $l^1$-errors at $t=0.2$  in the
mean and standard deviation of the density and corresponding convergence rates for
  the gPC-SG method with $M=4$.
  }
  \label{tab:order}
\begin{tabular}{|c||c|c||c|c|}
  \hline
\multirow{2}{3pt}{$N_c$}
 &\multicolumn{2}{c||}{Mean of $\rho$}&\multicolumn{2}{c|}{Standard deviation of $\rho$}\\
 \cline{2-5}
 &$l^1$ error&$l^1$ order &$l^1$ error&$l^1$ order\\
 \hline
10& 3.1144e-3& --          &4.4610e-4& --   \\
20& 1.4266e-4& 4.4483    & 2.1666e-5&  4.3639 \\
40& 4.3836e-6& 5.0243     & 9.4766e-7& 4.5149 \\
80& 1.3642e-7 & 5.0060    & 2.8874e-8& 5.0366 \\
160& 4.2527e-9 & 5.0035  & 7.6170e-10& 5.2444\\
320&  1.3279e-10 & 5.0012  &2.2683e-11& 5.0695 \\
\hline
  \end{tabular}
  \end{table}

\begin{example}[Uncertain boundary condition problem]\label{example:BC}\rm
Initially, the spatial domain $[0,1]$ is filled with static fluid with unit density, pressure of 0.6 and adiabatic index of $\frac{5}{3}$.
Waves are excited by a time periodic driver which acts at the left boundary $x=0$, i.e.,
\begin{equation}\label{eq:1DleftBC}
(\rho,u,p)(0,t) = (1, 0.02 \sin( 2\pi w t ),0.6),
\end{equation}
while outflow boundary condition is specified at the right $x=5$. Similar problems are considered in \cite{fuchs2010,Kappeli2014}.
Here, the parameter $w$ describing the frequency of the waves
contains uncertainty as follows
$w(\xi) = 1+0.1 \xi$.
Fig.~\ref{fig:UBC} gives mean and standard deviation of the density at time $t=4$ by using the gPC-SG method with $M=3$ and 200 uniform cells,
where the solid lines represent the reference solutions given by a collocation method with 1000 uniform cells. Here, the collocation method takes 40 Gaussian points as collocation points, and solves the deterministic problem for each collocation point by using the fifth-order accurate finite difference WENO scheme and the third-order Runge Kutta method \eqref{eq:RK1} for time discretization.
Good agreements between the numerical solutions and the reference solutions can be observed in these results, but we can observe that the uncertainty in frequency influences the local peak and valley values of the density.
It is different from the results in Fig. \ref{fig:UBC_fixw}, where
the same peak and valley values
are observed in the deterministic problems with different $w$.

\begin{figure}[htbp]
  \centering
  \includegraphics[height=0.38\textwidth]{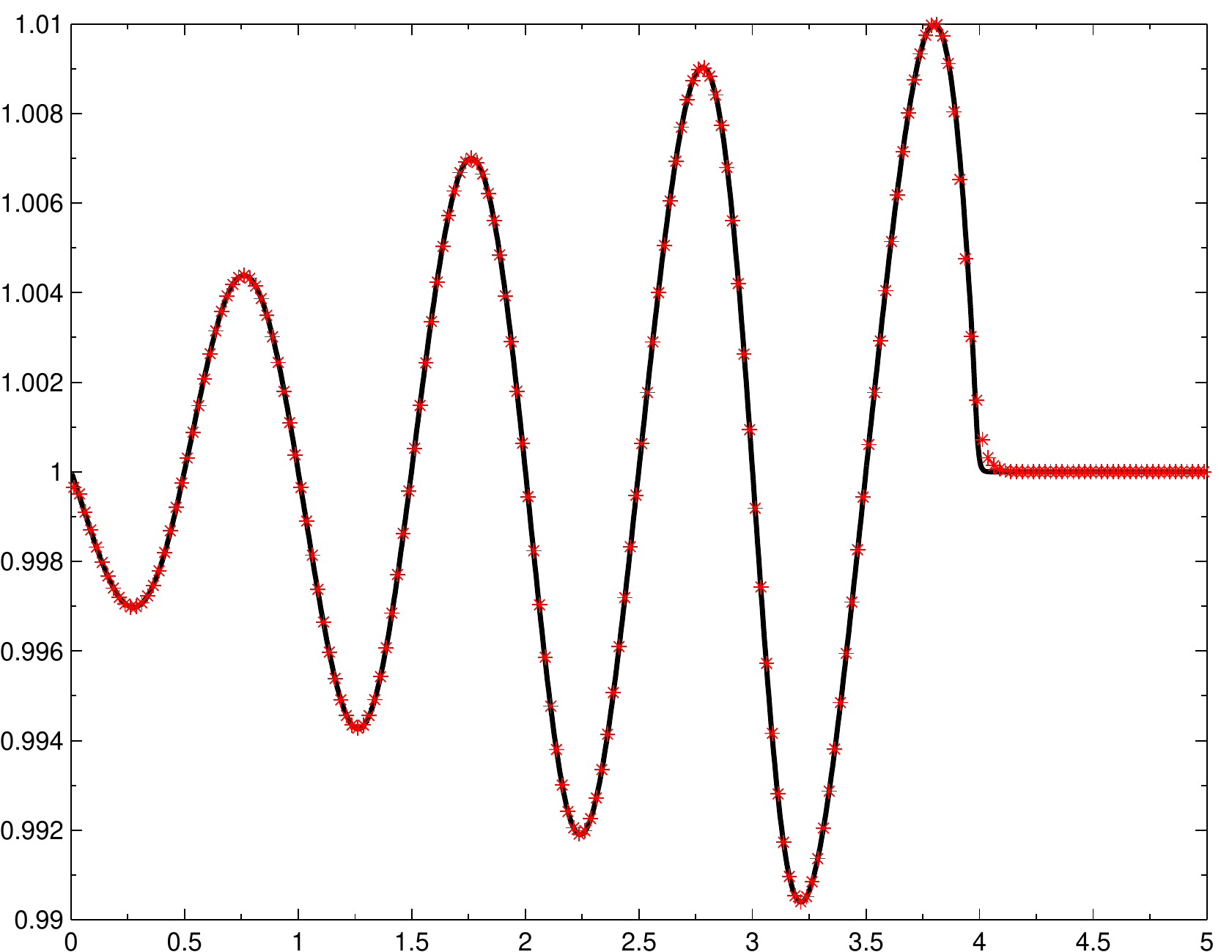}
  \includegraphics[height=0.39\textwidth]{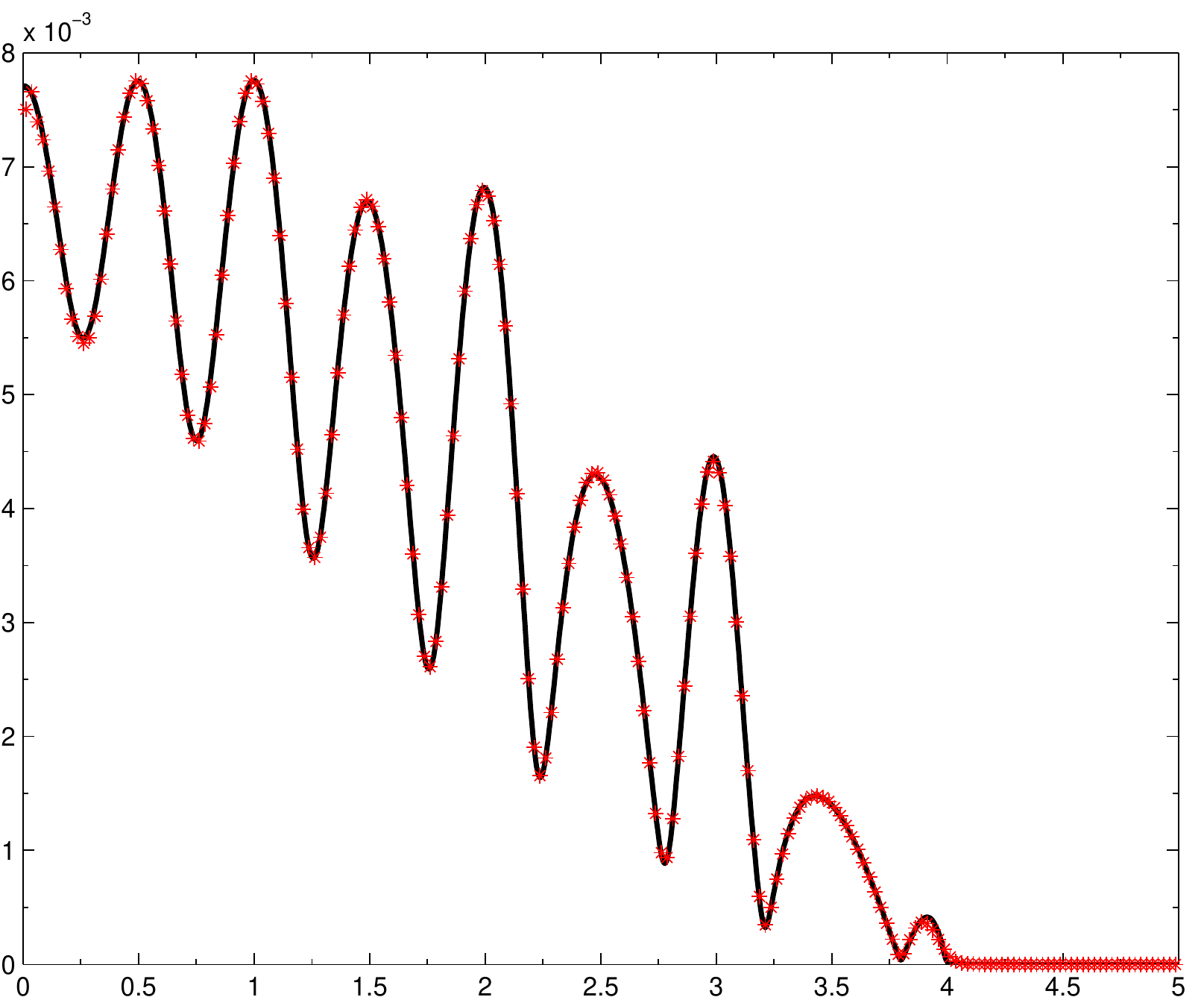}
  \caption{\small Example \ref{example:BC}: the mean and standard deviation of $\rho$ at $t=4$ by using the gPC-SG method (``$\color{red}*$'') with $M=3$ and 200 uniform cells, and the solid lines represent the reference solutions given by a collocation method with 1000 uniform cells.
 }
  \label{fig:UBC}
\end{figure}

\begin{figure}[htbp]
  \centering
  \includegraphics[height=0.38\textwidth]{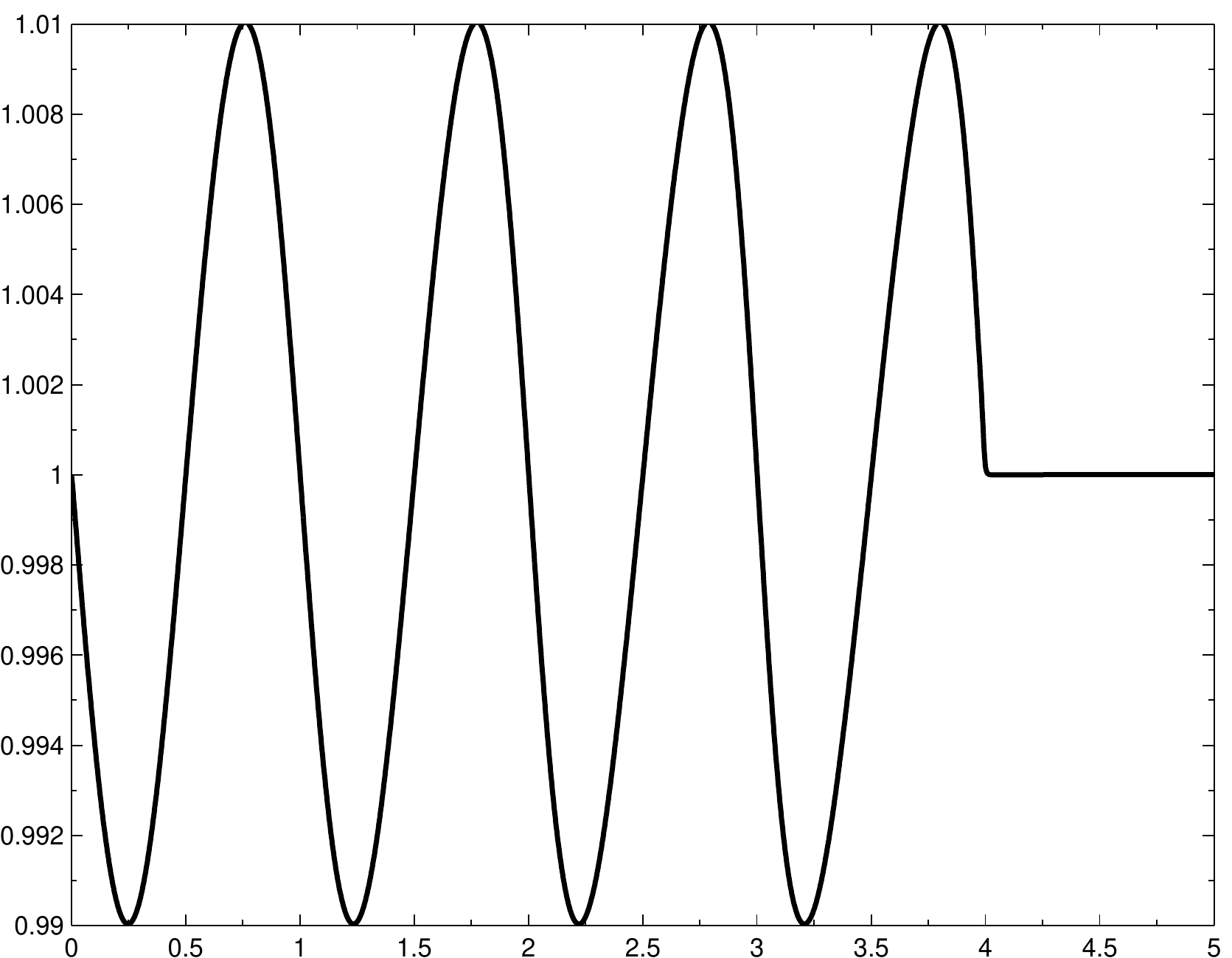}
  \includegraphics[height=0.38\textwidth]{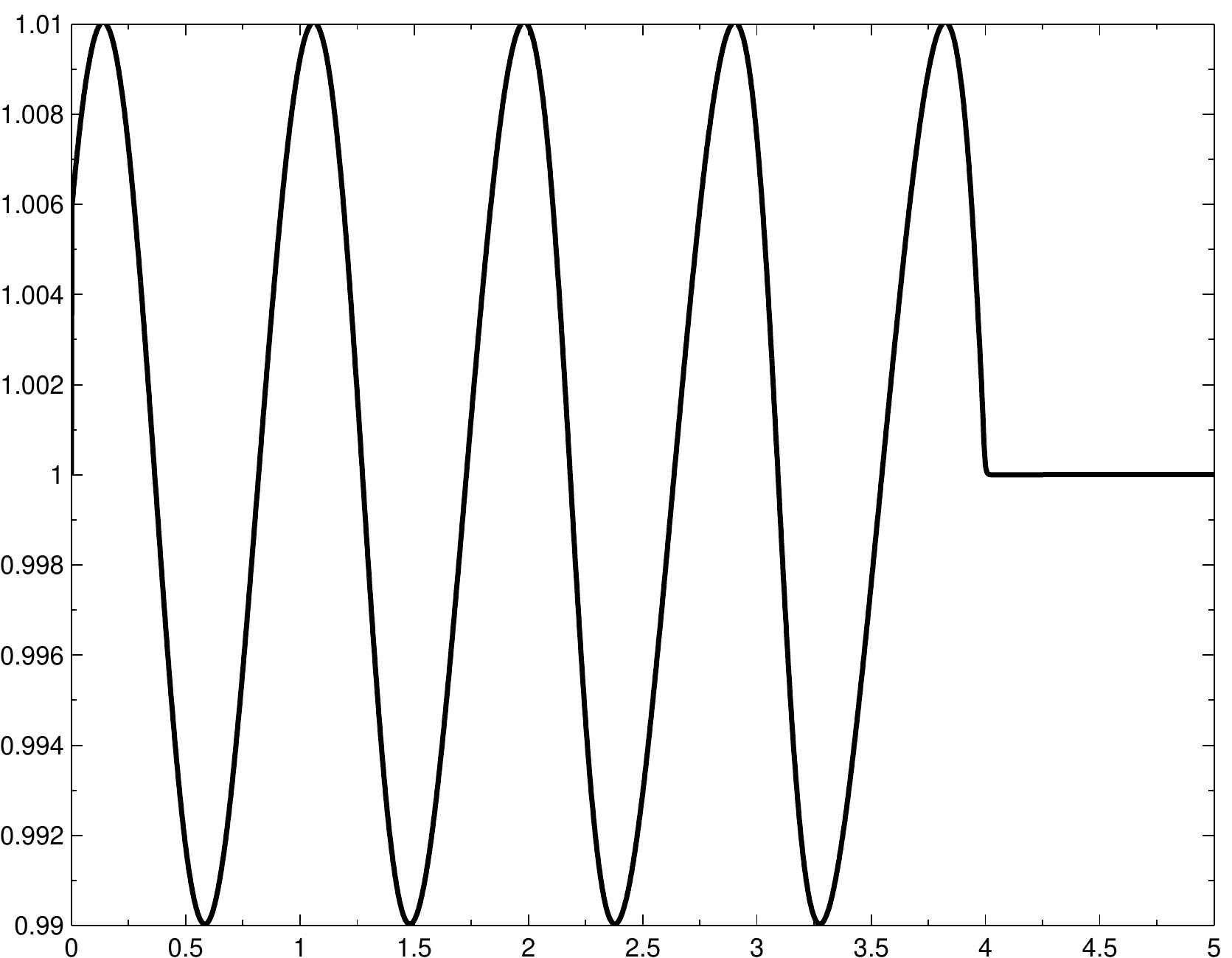}
  \caption{\small Example \ref{example:BC}: $\rho$ at $t=4$ for deterministic $w$ cases: $w=1$ (left), $w=1.1$ (right).
 }
  \label{fig:UBC_fixw}
\end{figure}

\end{example}

\begin{example}[Sod problem with uncertain initial condition]\label{example:Sod}\rm
This test considers the Sod shock tube problem with uncertainty in the location of the initial discontinuity. The initial data are
  \begin{equation}
    \label{eq:Sod2}
    (\rho,u,p)(x,0,\xi)=
    \begin{cases}
      (1,0,1),\ \ & x < 0.5+0.05 \xi,\\
      (0.125,0,0.1),\ \ & x>0.5+0.05 \xi,
    \end{cases}
  \end{equation}
and $\Gamma = 1.4$. The same setup is considered in \cite{Poette2009,Despres2013,Pettersson2014,Chertock:Euler}.
The gPC-SG method may easily fail in this test due to the appearance of negative density caused by the oscillations \cite{Poette2009}.
In our computations, numerical solutions with negative density and pressure in few cells are encountered in the first few steps, the positivity-preserving technique presented in Remark \ref{rem:limiter} is used (only in this test) to deal with such difficulty.
Fig. \ref{fig:SodIN} displays numerical means and standard deviations of the density
  at $t=0.18$, which are obtained by using the gPC-SG method with 200
  uniform cells and $M=8$, where the solid lines represent the reference solutions given by the exact Riemann solver with 64 Gaussian points in the random space to evaluate the mean and standard deviation.
Three sharp fronts with large standard deviation are observed, and their locations are corresponding to the rarefaction fan, contact discontinuity and shock from left to right. One can see small oscillations in the numerical standard deviation, which is also observed in \cite{Despres2013}. This Gibbs phenomenon results from the discontinuity of the solution in random space, and can be improved by utilizing piecewise approximations in random space, such as multi-element gPC \cite{Wan2005} or wavelet basis \cite{Maitre2004,Pettersson2014}.

\begin{figure}[htbp]
  \centering
  \includegraphics[height=0.38\textwidth]{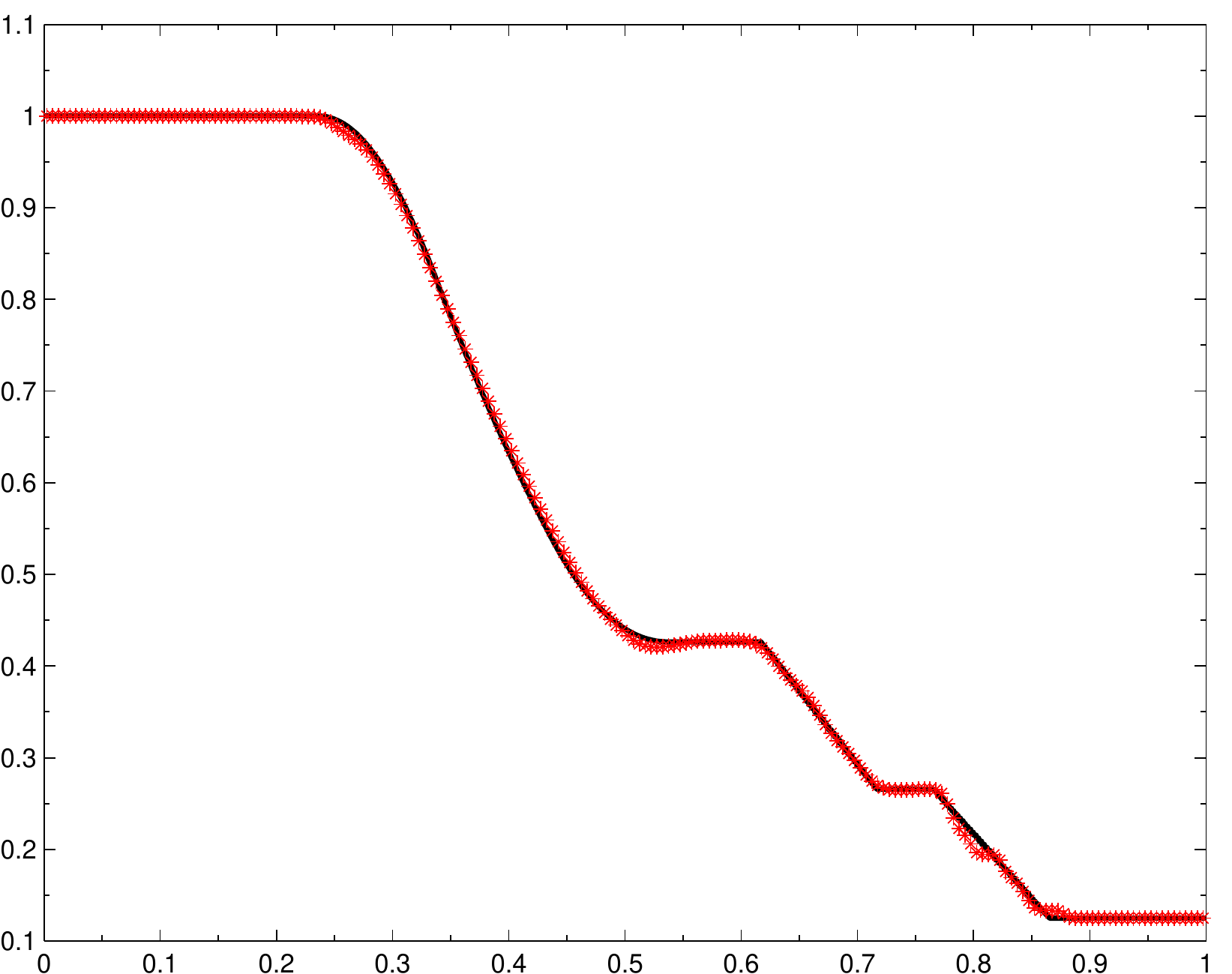}
  \includegraphics[height=0.38\textwidth]{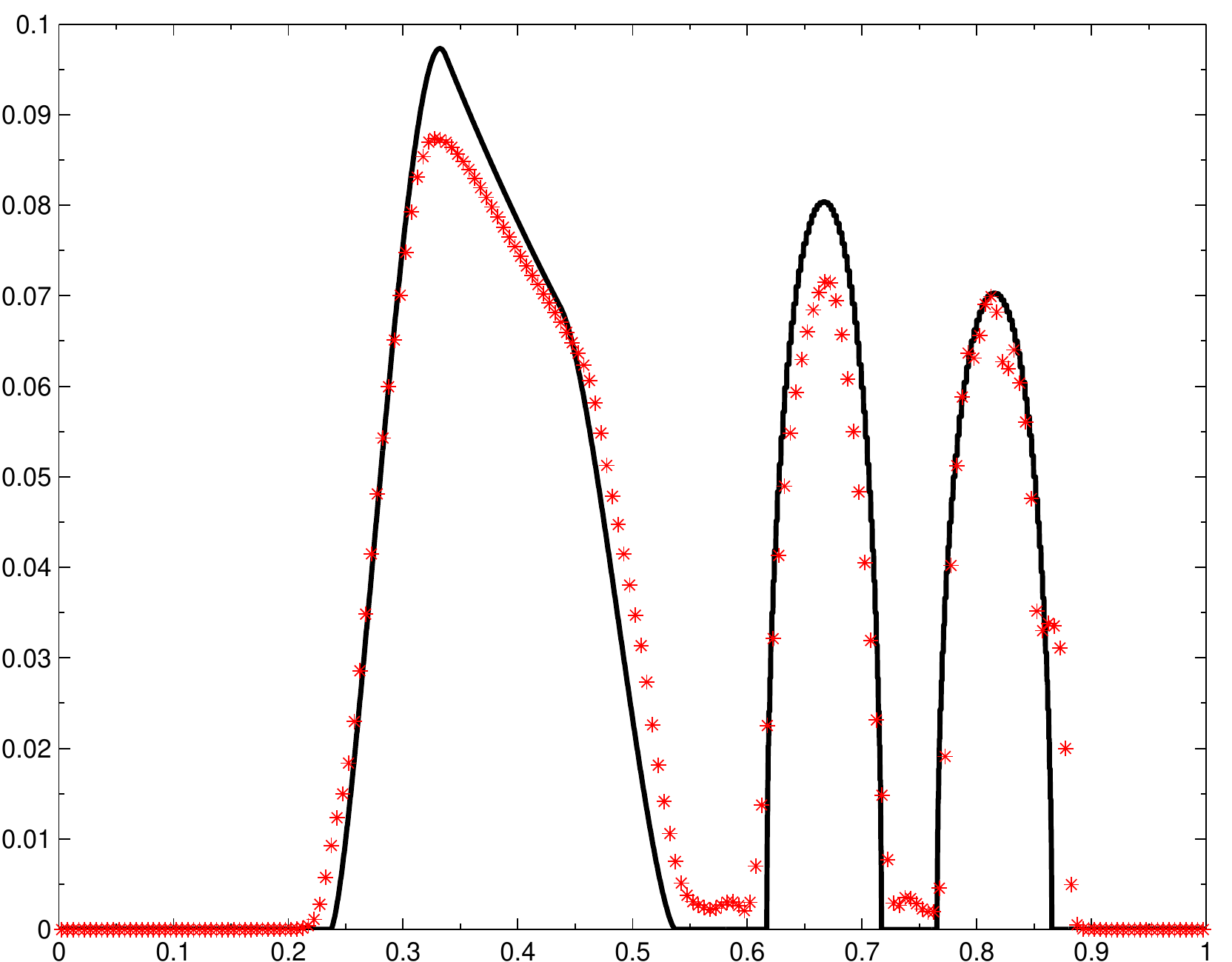}
  \caption{\small Example \ref{example:Sod}: the mean and standard deviation of the density at time $t=0.18$ by using the gPC-SG method (``$\color{red}*$'') with $M=8$ and 200 uniform cells, and the solid lines represent the reference solutions.
 }
  \label{fig:SodIN}
\end{figure}

\end{example}

\subsection{2D case}

The 2D Riemann problems are theoretically studied
for the first time in \cite{zhang1990}.
Since then, they  become the benchmark tests for verifying the
accuracy and resolution of numerical schemes, see
\cite{SchulzRinne1993,LaxLiu1998,HanLiTang2011,WuYangTang2014b,WuTang2014}.

Four 2D Riemann problems in spatial domain $[0,1]\times[0,1]$ with uncertain initial data or adiabatic index of 2D Euler equations  \eqref{eq:2DEuler} are considered here.


\begin{example}[2D Riemann problems I and II]\label{example:RP1}\rm
The first two problems are two perturbed versions of a classic deterministic Riemann problem with initial conditions \cite{LaxLiu1998}
\begin{equation}\label{eq:2DRP1a}
(\rho,u,v,p)({x},{y},0)=
\begin{cases}(1,0,0,1),& x>0.5,y>0.5,\\
  (0.5197,-0.7259,0,0.4),&    x<0.5,y>0.5,\\
  (1,-0.7259,-0.7259,1),&      x<0.5,y<0.5,\\
  (0.5197,0,-0.7259,0.4),&    x>0.5,y<0.5,
  \end{cases}
\end{equation}
which are about the interaction of four rarefaction waves.

In the first configuration, we take $\Gamma=1.4$ and assume that the initial data $-0.7259$ of fluid velocity in \eqref{eq:2DRP1a} are perturbed to $-0.7259+0.1 \xi$. The gPC-SG  method is used to study the effect of this random inputs on the flow structure. Fig.~\ref{fig:2DRP1a} gives the contours of numerical mean and standard deviation of the density at time $t=0.2$ by using the gPC-SG method with $M=3$ and $250\times 250$ uniform cells, while the reference solutions given by a collocation method with $400 \times 400$ uniform cells are also displayed. Here and following, the collocation method takes 40 Gaussian points as collocation points, and
solves the deterministic problem for each collocation point by using the fifth-order accurate finite difference WENO scheme and the third-order Runge Kutta method \eqref{eq:RK1} for time discretization.
It can be seen that the mean and standard deviation of the density are correctly captured by the gPC-SG method. For a further comparison,
the mean and standard deviation of the density are plotted along the line $y=x$, see
Fig. \ref{fig:2DRP1a_Comparison}. Those plots validate the above observation.

The second configuration takes the certain initial data \eqref{eq:2DRP1a} with uncertain adiabatic index $\Gamma$, which satisfies
$$
\Gamma (\xi) = 1.4+0.1 \xi.
$$
To analyze the effect of this uncertainty, the gPC-SG method is used to compute the numerical solutions with $M=3$ and $250\times 250$ uniform cells.
The contours of numerical mean and standard deviation of the density at time $t=0.2$ are displayed in Fig.~\ref{fig:2DRP1b}, where the reference solutions are obtained by the collocation method with $400 \times 400$ uniform cells.
It is seen that the mean of the density is similar to that in the first configuration, but the standard deviation is very different.
Moreover, the gPC-SG method captures the flow structure and resolves the standard deviation with high resolution.
Fig. \ref{fig:2DRP1b_Comparison} gives a further comparison of the the mean and standard deviation of the  density  along the line $y=x$,
and demonstrates good agreement between the numerical solutions of the gPC-SG method and the reference ones.

\begin{figure}[htbp]
  \centering
  {\includegraphics[width=0.48\textwidth]{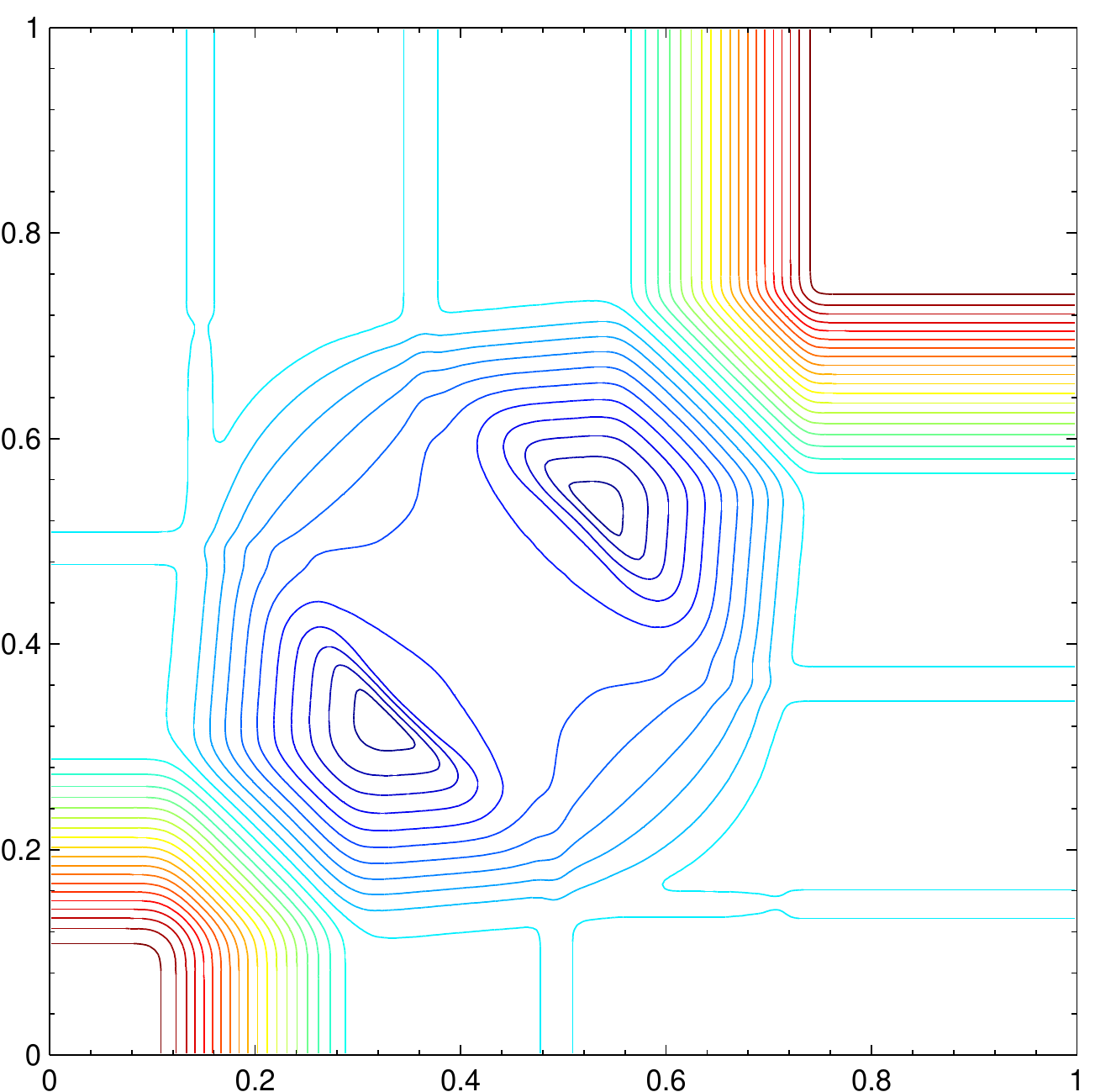}}
  {\includegraphics[width=0.48\textwidth]{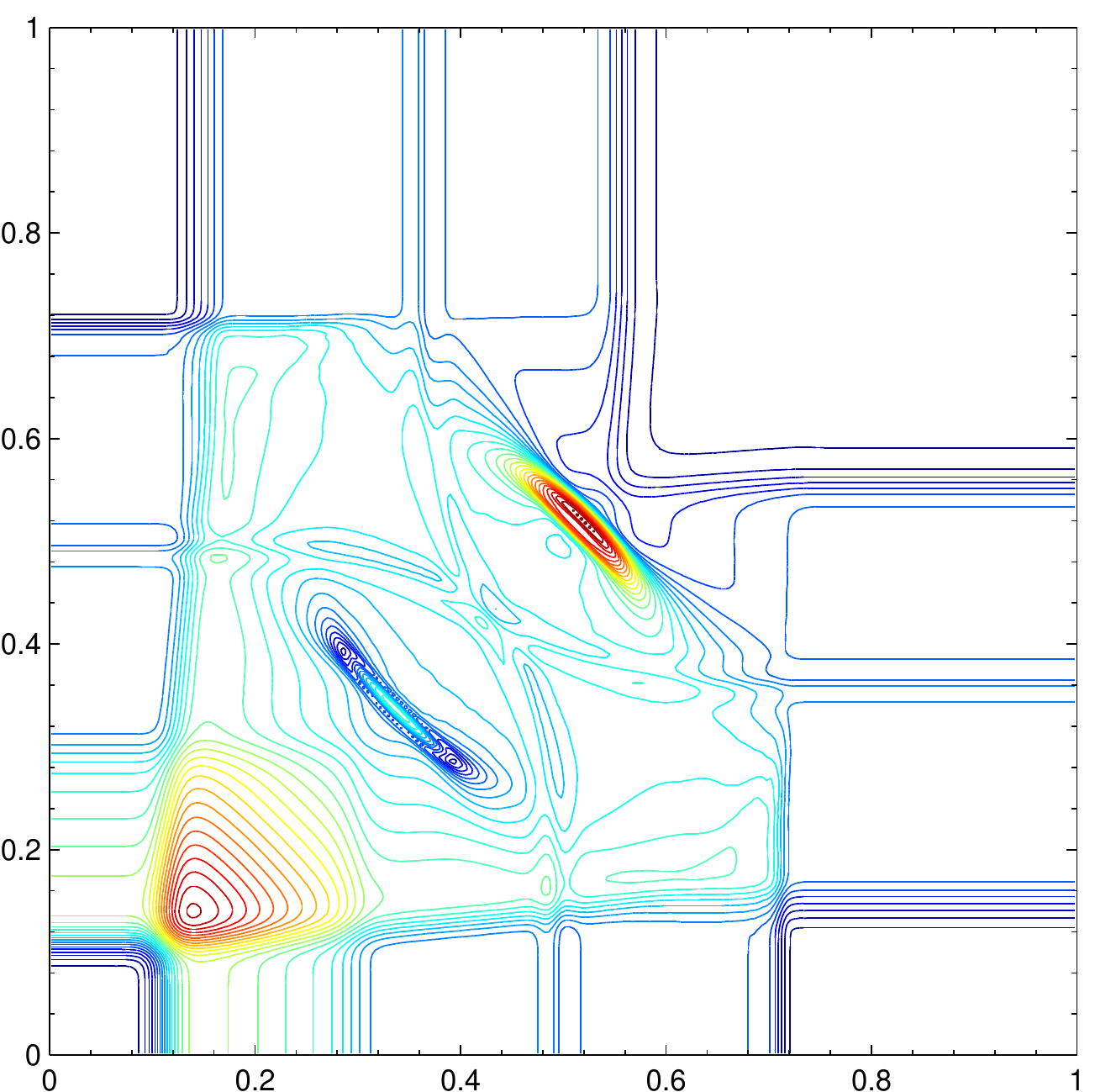}}
  {\includegraphics[width=0.48\textwidth]{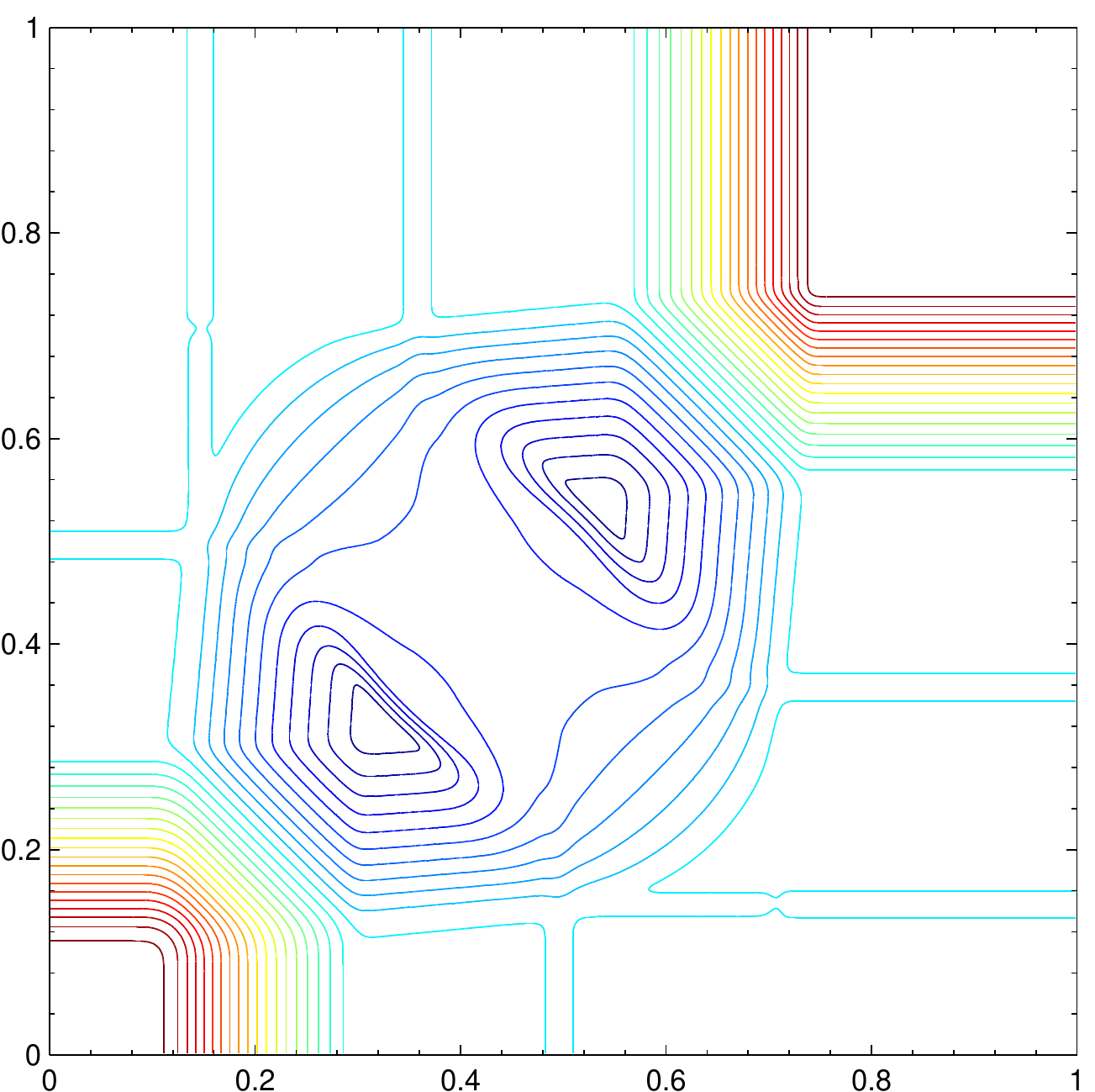}}
  {\includegraphics[width=0.48\textwidth]{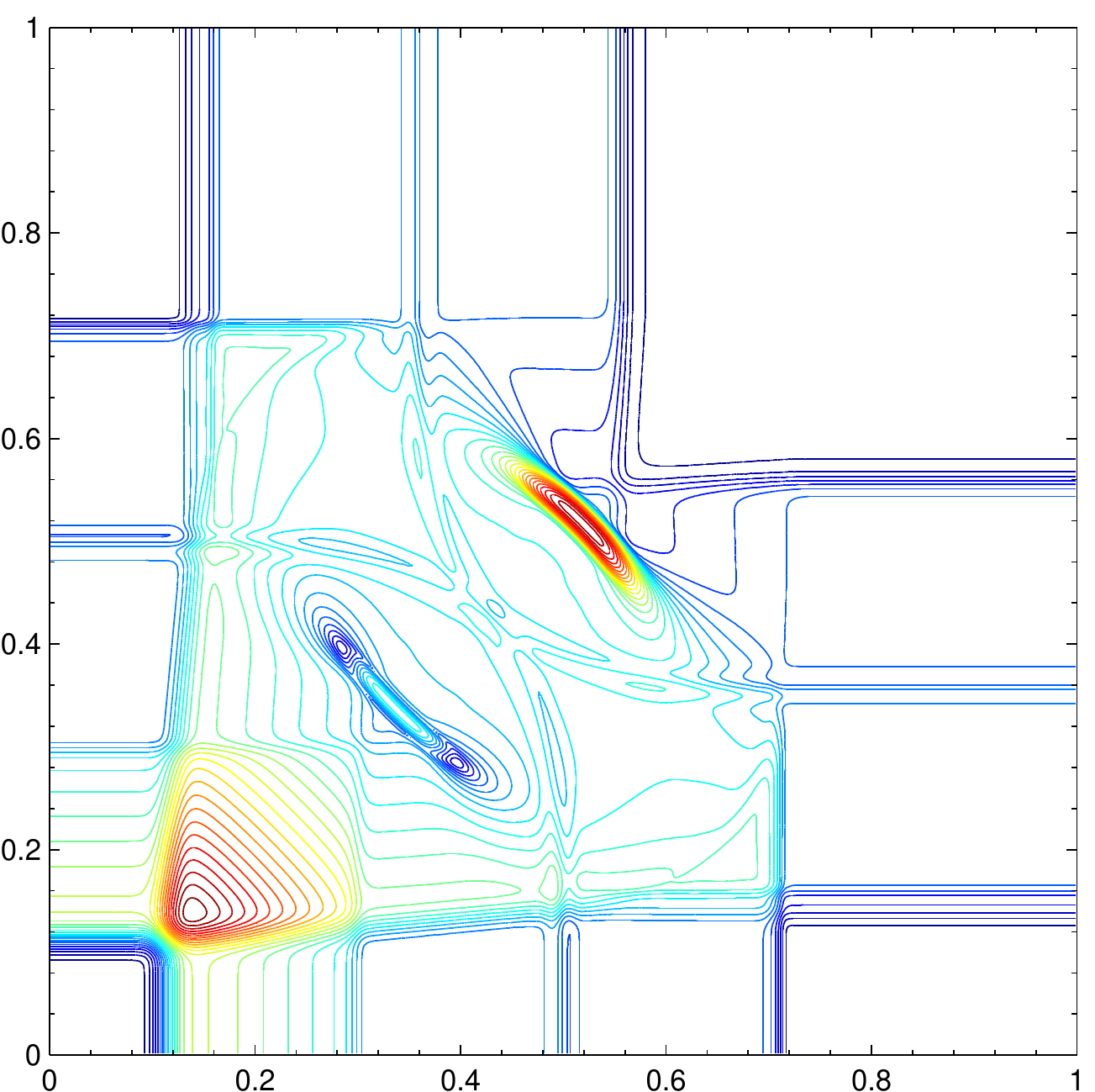}}
  \caption{\small The first configuration of Example \ref{example:RP1}:
  The contours of numerical  mean (left) and standard deviation (right) of  the density
  at $t=0.2$ within the domain $[0,1] \times [0,1]$.
         30 equally spaced contour lines are used. From top to bottom:  gPC-SG method with $M=3$ and $250 \times 250$ uniform cells, and
         collocation method with $400 \times 400$ uniform cells.
 }
  \label{fig:2DRP1a}
\end{figure}

\begin{figure}[htbp]
  \centering
  \includegraphics[width=0.48\textwidth]{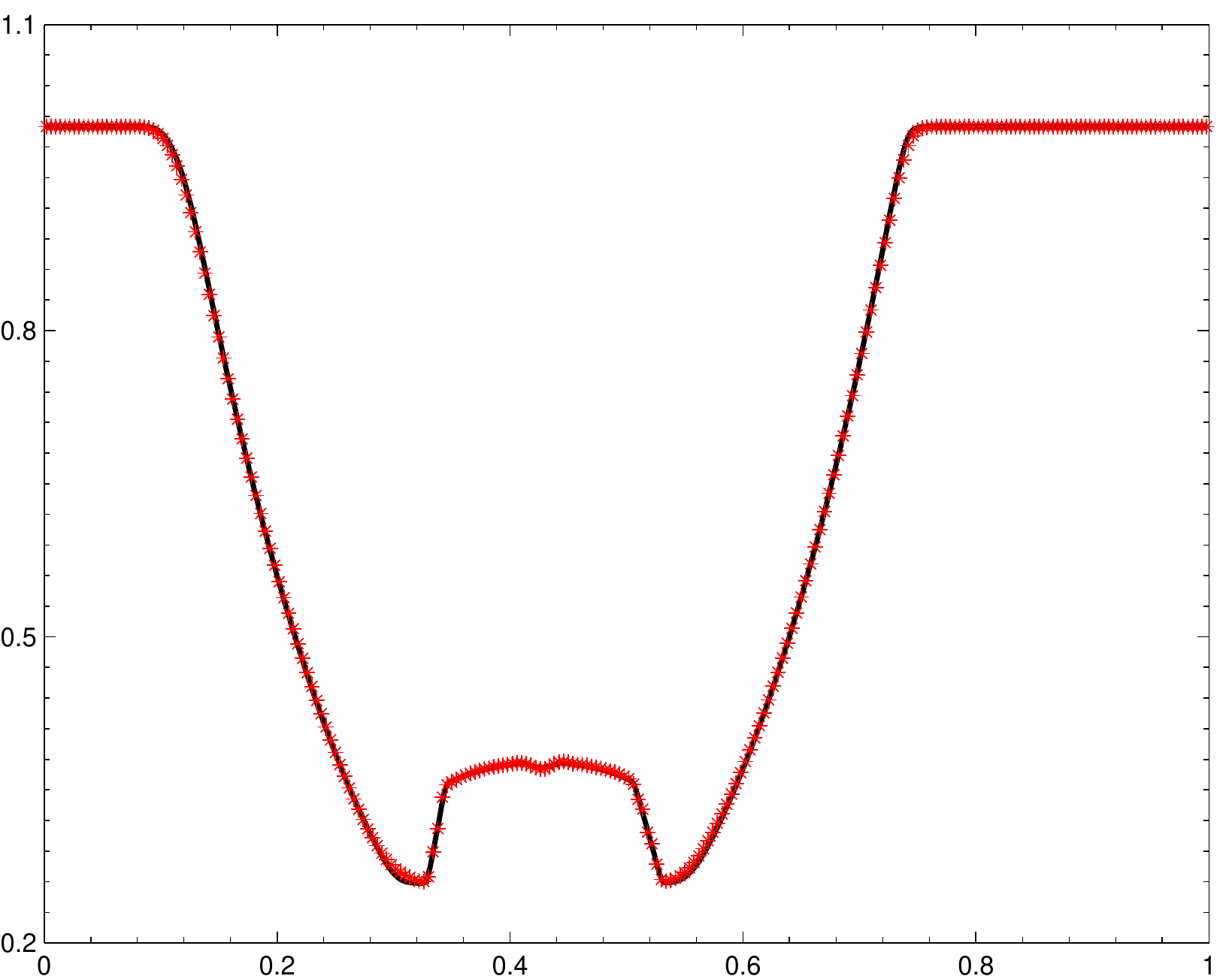}
  \includegraphics[width=0.48\textwidth]{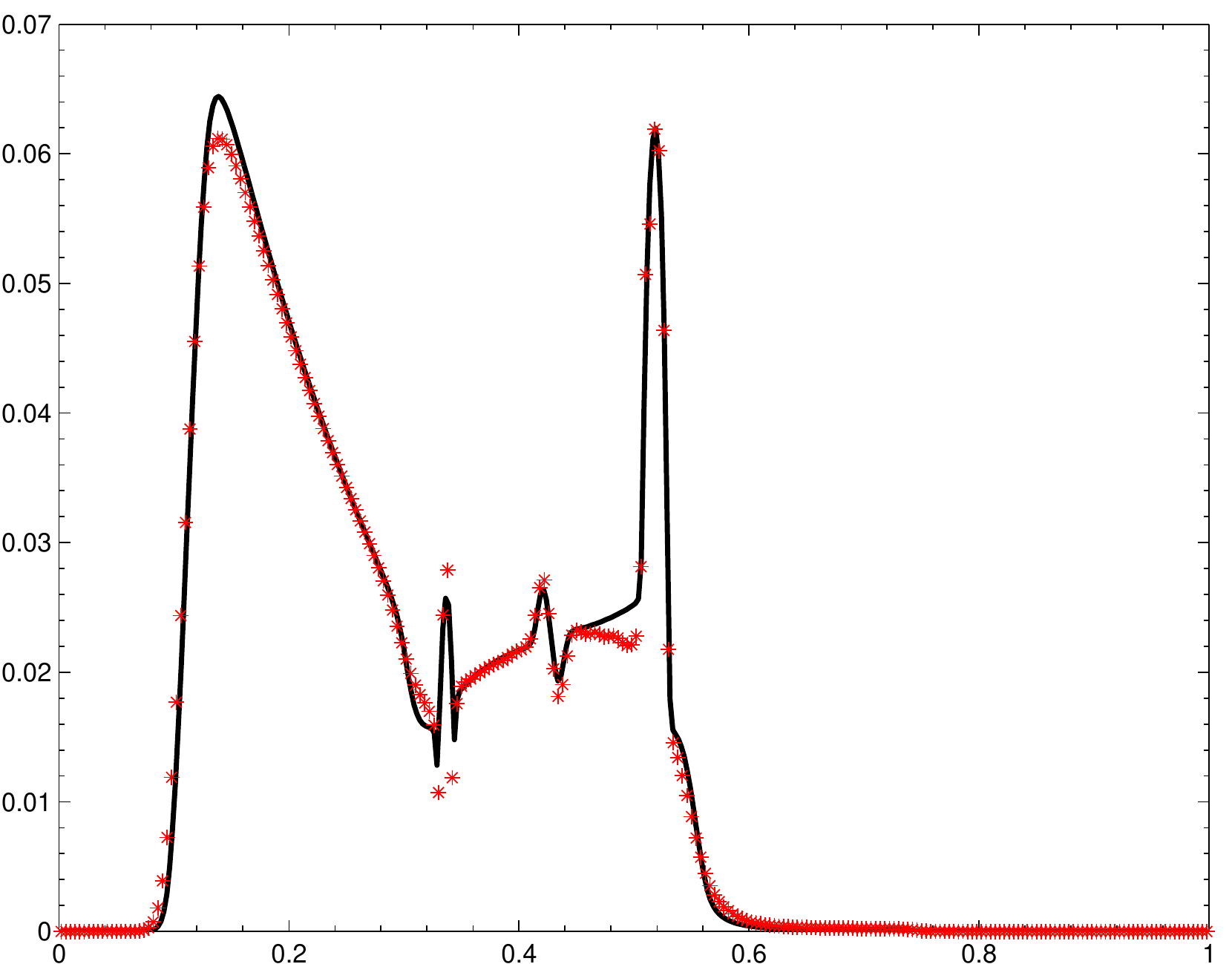}
  \caption{\small Same as Fig. \ref{fig:2DRP1a}, except for mean (left) and standard deviation (right) of the density
   along the line $y=x$  within the scaled interval $[0,1]$. ``${\color{red}*}$'' denotes the numerical results given by gPC-SG method with $250 \times 250$ uniform cells, while the solid lines represent the numerical results of
         collocation method $400 \times 400$ uniform cells.
 }
  \label{fig:2DRP1a_Comparison}
\end{figure}

\begin{figure}[htbp]
  \centering
  {\includegraphics[width=0.48\textwidth]{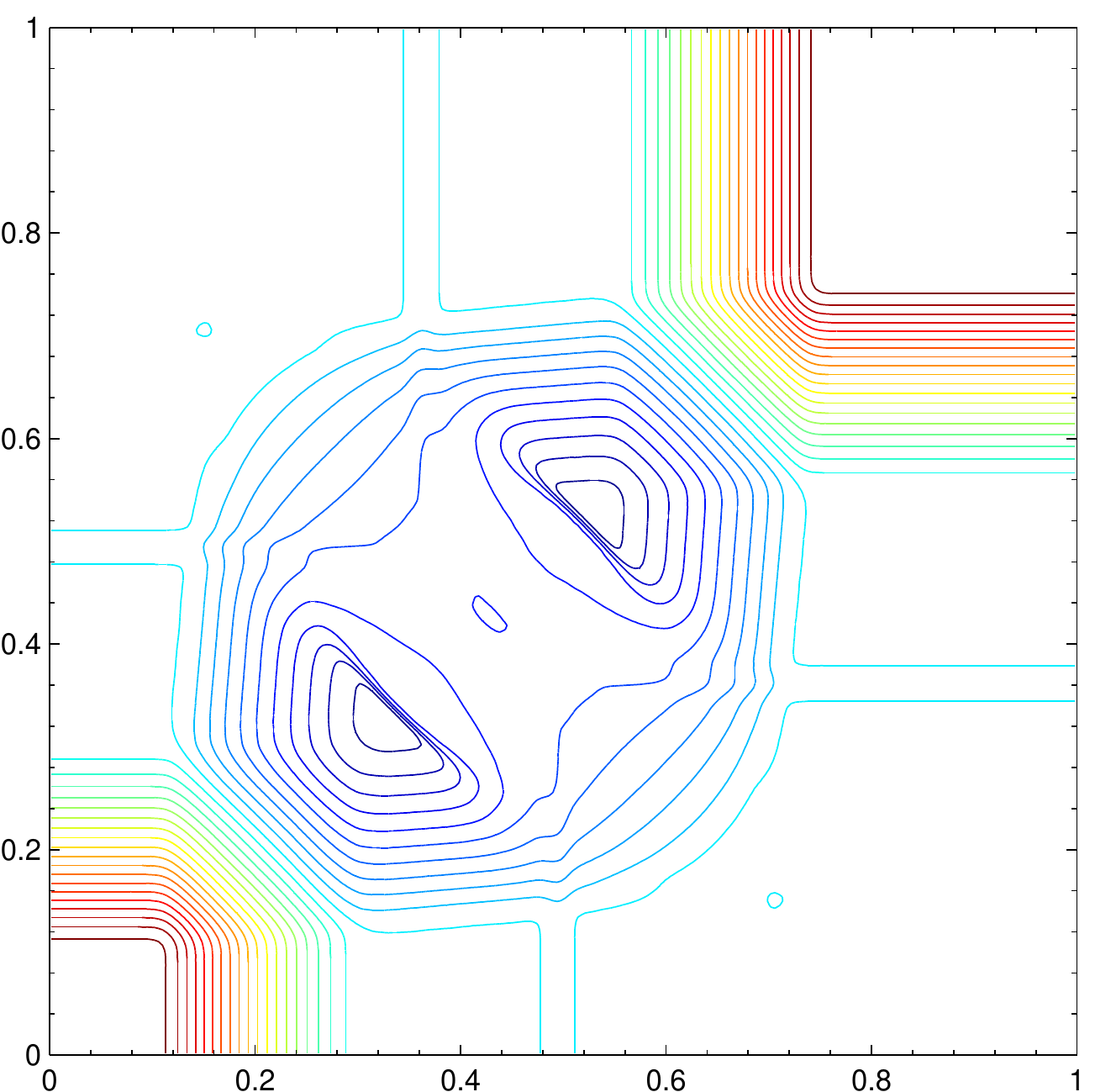}}
  {\includegraphics[width=0.48\textwidth]{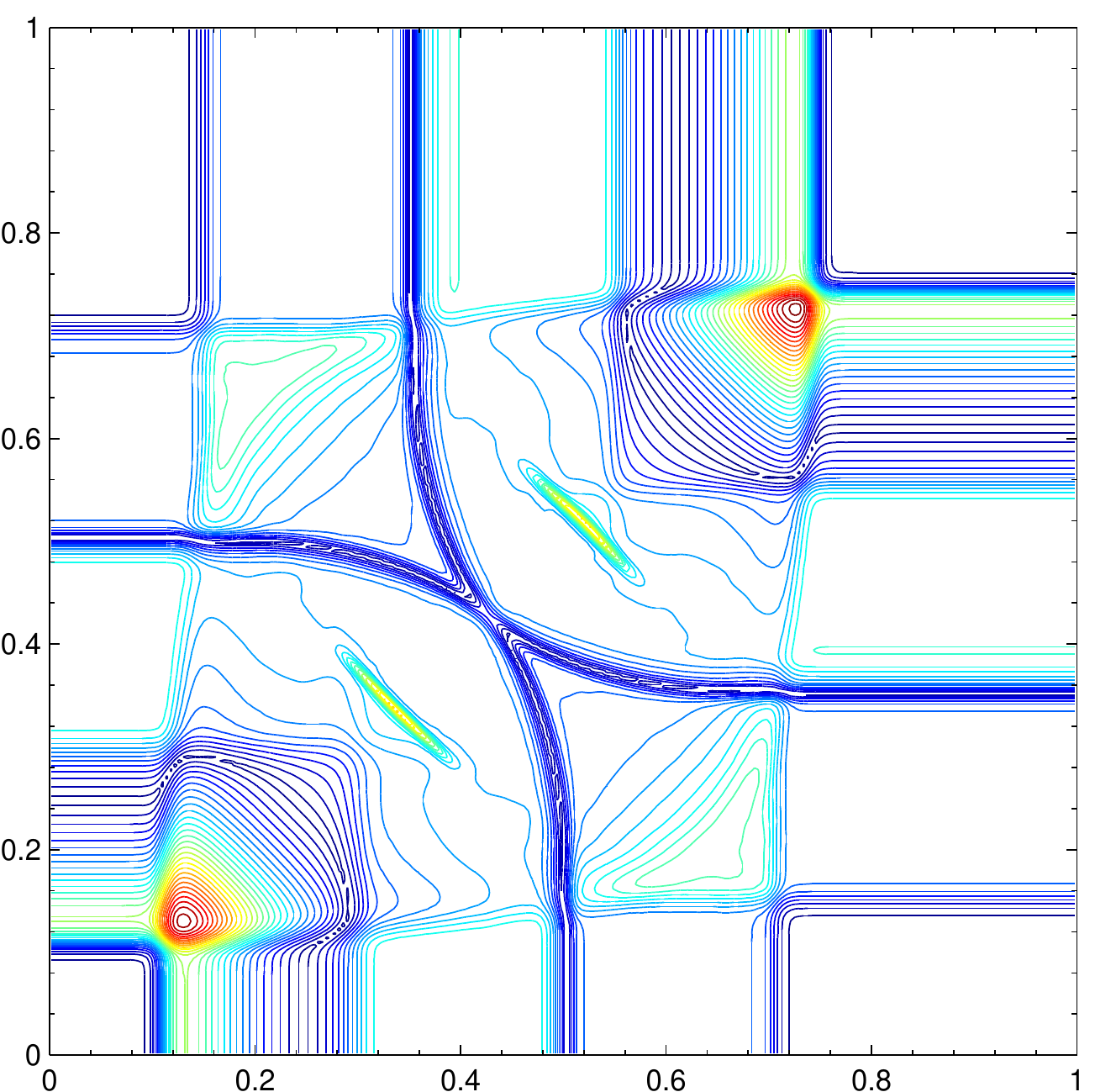}}
  {\includegraphics[width=0.48\textwidth]{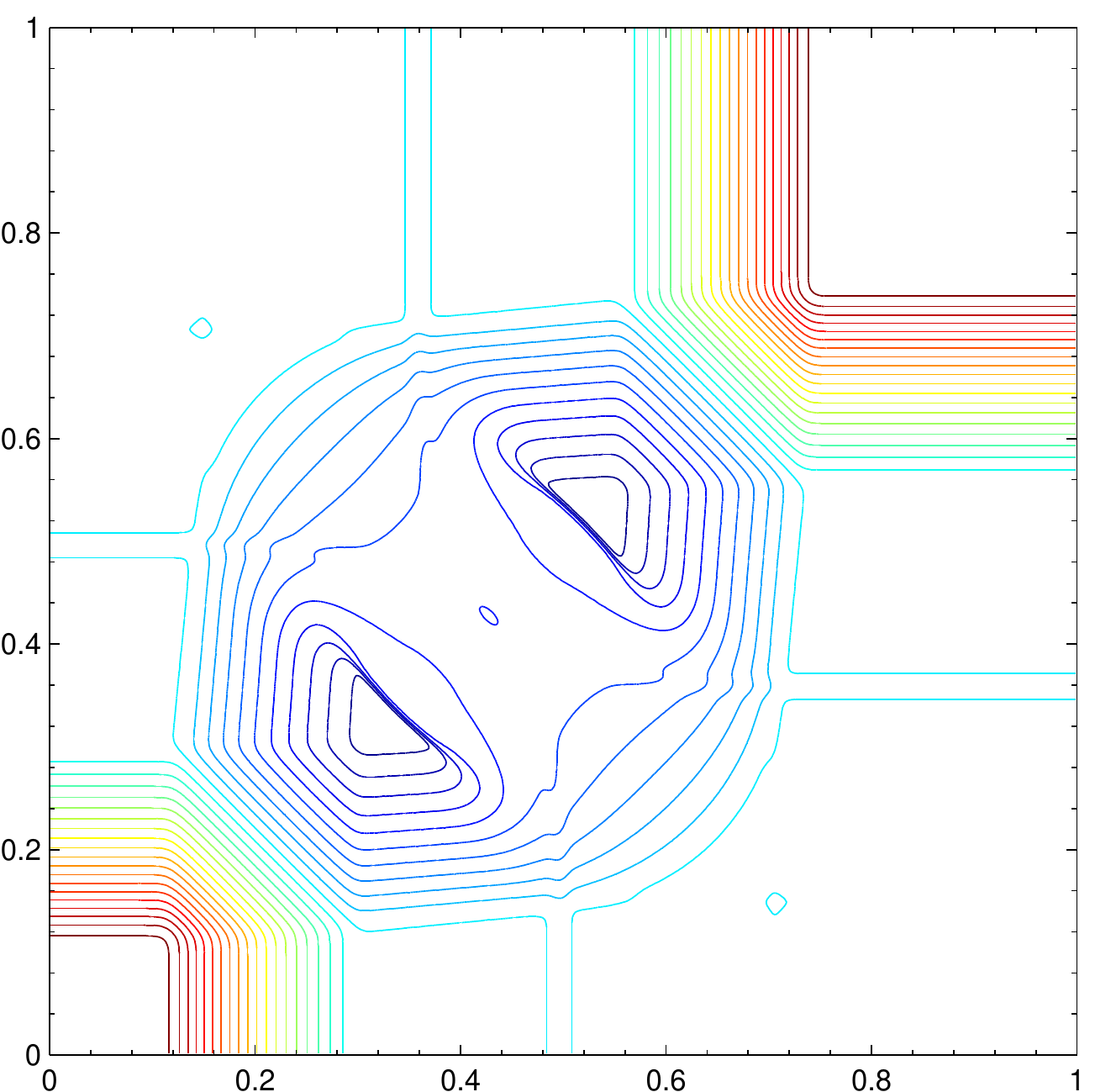}}
  {\includegraphics[width=0.48\textwidth]{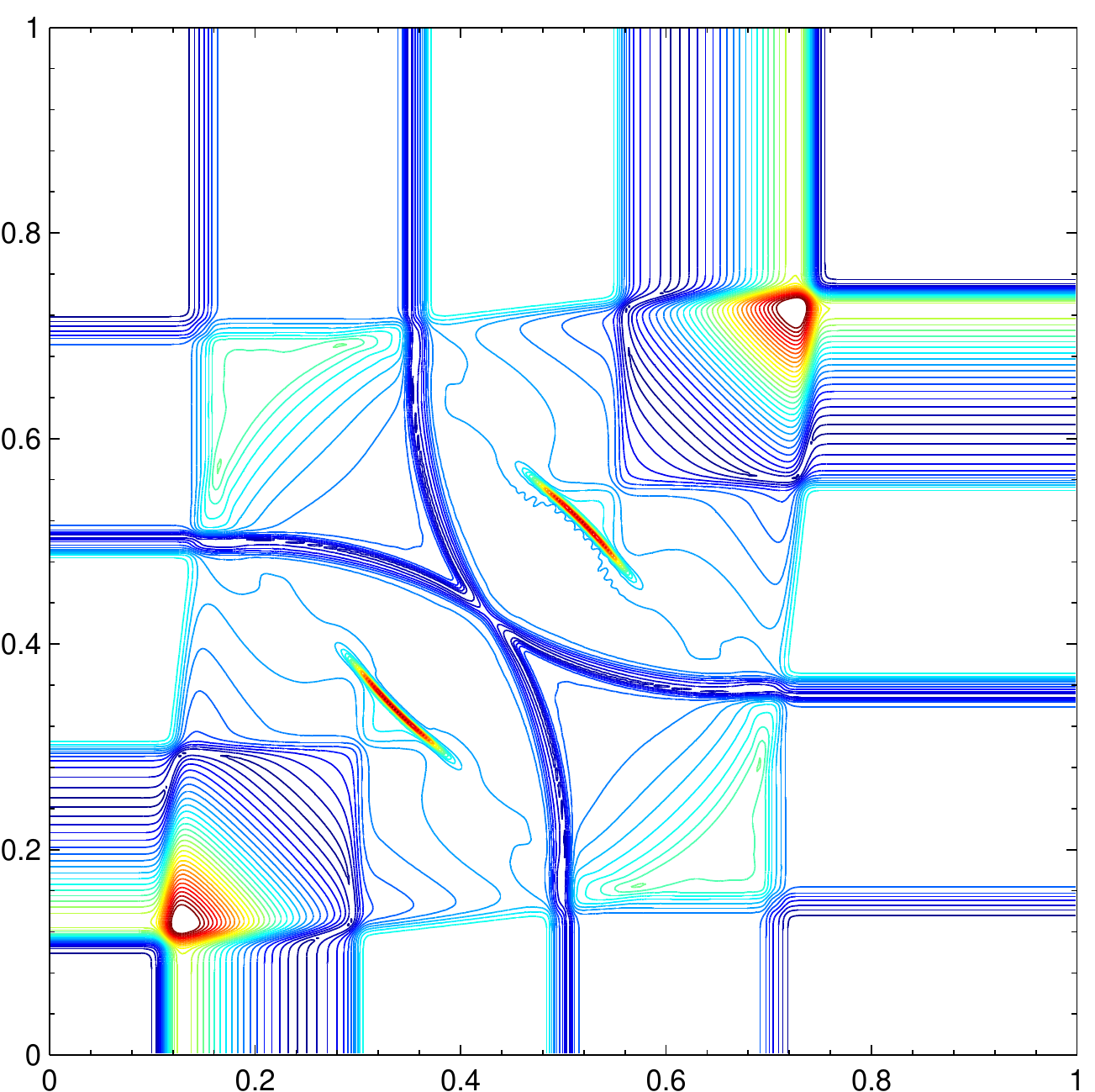}}
  \caption{\small Same as Fig. \ref{fig:2DRP1a}, except for the second configuration of Example \ref{example:RP1}.
 }
  \label{fig:2DRP1b}
\end{figure}

\begin{figure}[htbp]
  \centering
  \includegraphics[width=0.48\textwidth]{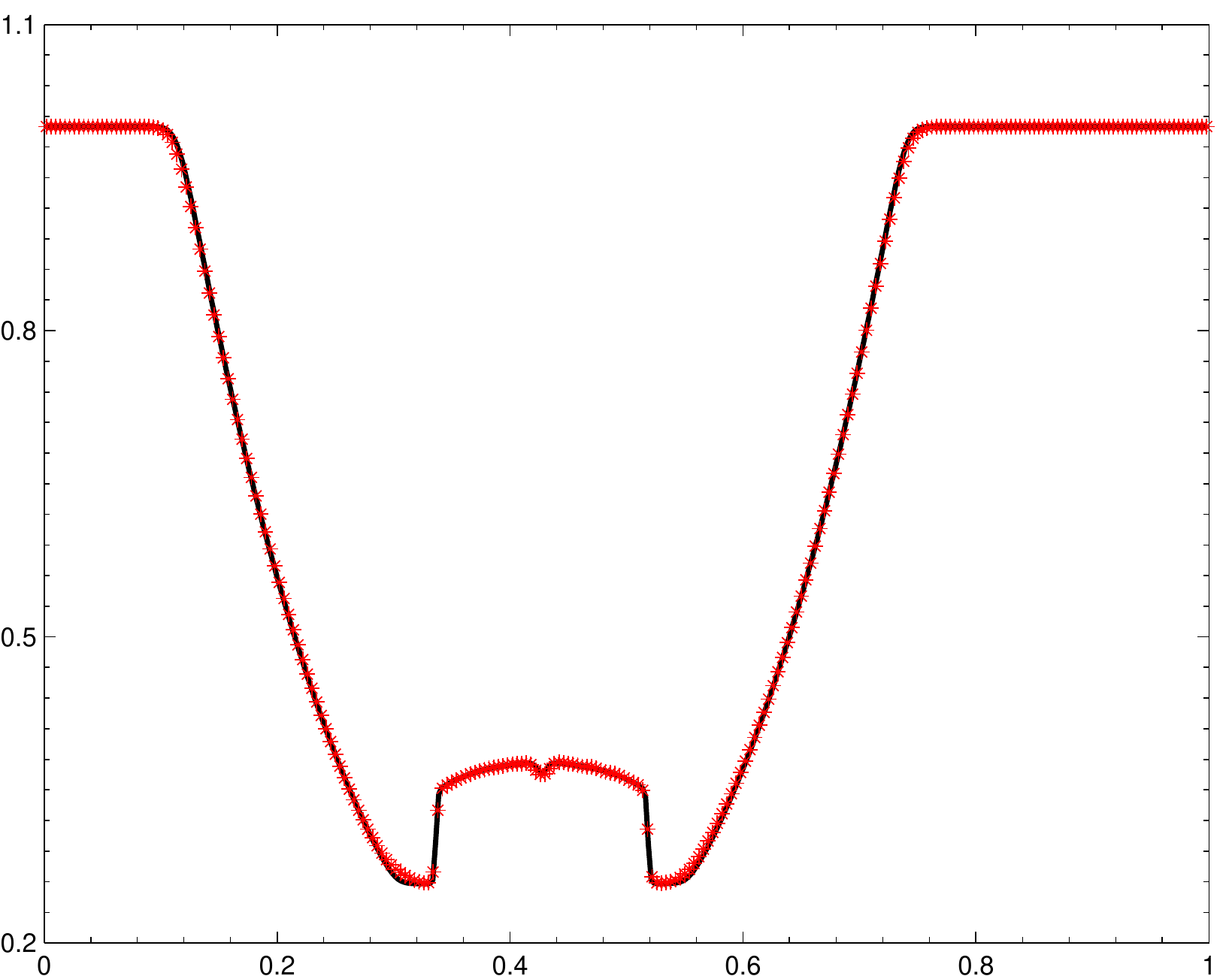}
  \includegraphics[width=0.48\textwidth]{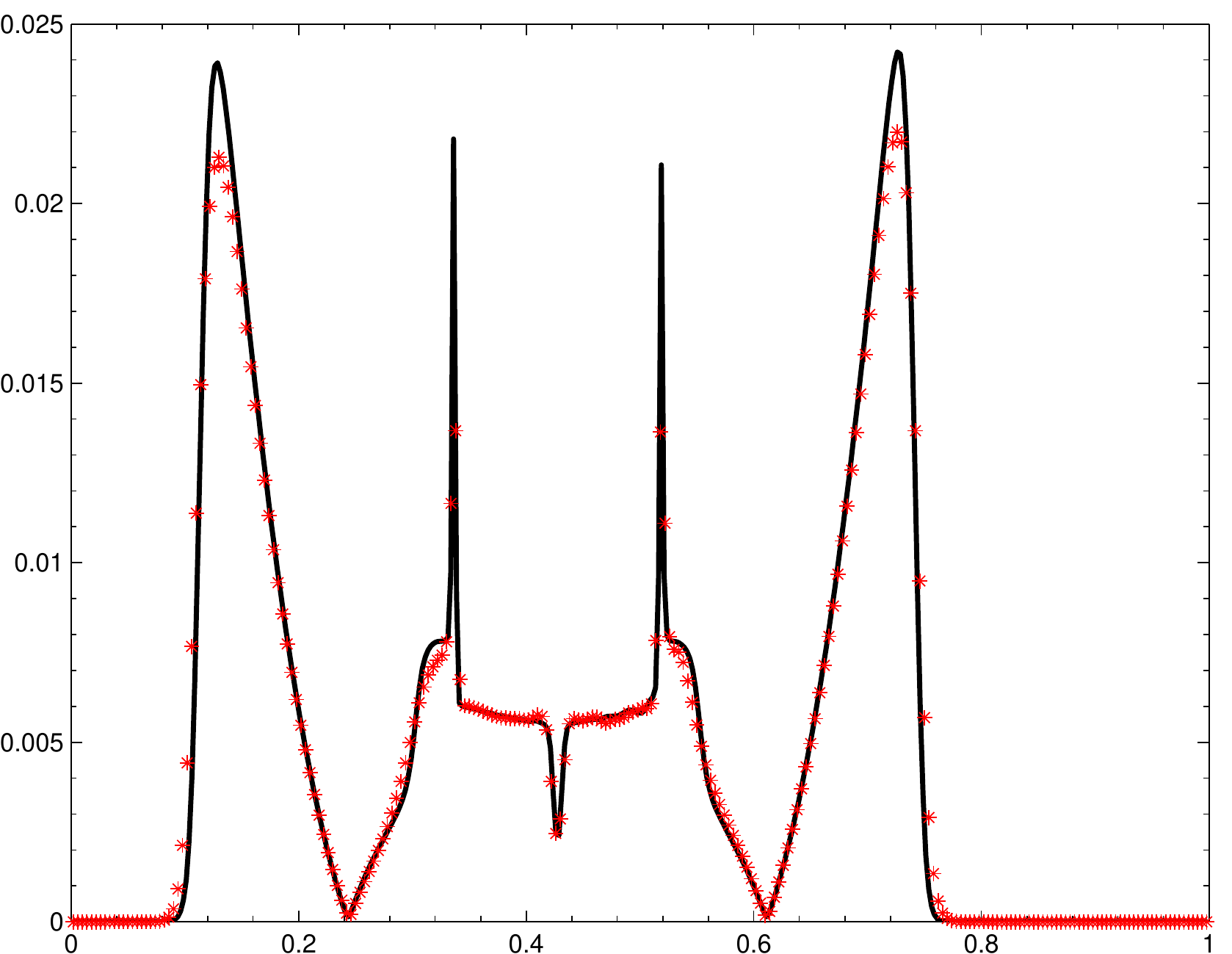}
  \caption{\small Same as Fig. \ref{fig:2DRP1a_Comparison}, except for the second configuration of Example \ref{example:RP1}.
 }
  \label{fig:2DRP1b_Comparison}
\end{figure}

\end{example}

\begin{example}[2D Riemann problems III and IV]\label{example:RP2}\rm
The last two problems are two perturbed versions of a classic deterministic Riemann problem with initial conditions \cite{LaxLiu1998}
\begin{equation}\label{eq:2DRP2a}
(\rho,u,v,p)({x},{y},0)=
\begin{cases}(0.5197,0.1,0.1,0.1,0.4),& x>0.5,y>0.5,\\
  (1,-0.6259,0.1,1),&    x<0.5,y>0.5,\\
  (0.8,0.1,0.1,1),&      x<0.5,y<0.5,\\
  (1,0.1,-0.6259,1),&    x>0.5,y<0.5,
  \end{cases}
\end{equation}
which describe the interaction of two rarefaction waves and two contact discontinuities.

The first  case takes $\Gamma=1.4$ and assumes that the initial density $\rho(x,y,0)$ given in \eqref{eq:2DRP2a} contains ``ten percent'' uncertainty, that is,
it is perturbed to $(1+0.1 \xi)\rho(x,y,0)$. The gPC-SG method is used to study the effect of this random inputs on the flow structure.  Fig.~\ref{fig:2DRP2a} displays the contours of numerical mean and standard deviation of the density at time $t=0.2$ by using the gPC-SG method with $M=3$ and $250\times 250$ uniform cells, and the reference ones given by the collocation method with $400 \times 400$ uniform cells.
Fig. \ref{fig:2DRP2a_Comparison} gives the mean and standard deviation of the  density    along the line $y=x$.
We see that the results given by the gPC-SG method agree well with the reference solutions.

\begin{figure}[htbp]
  \centering
  {\includegraphics[width=0.48\textwidth]{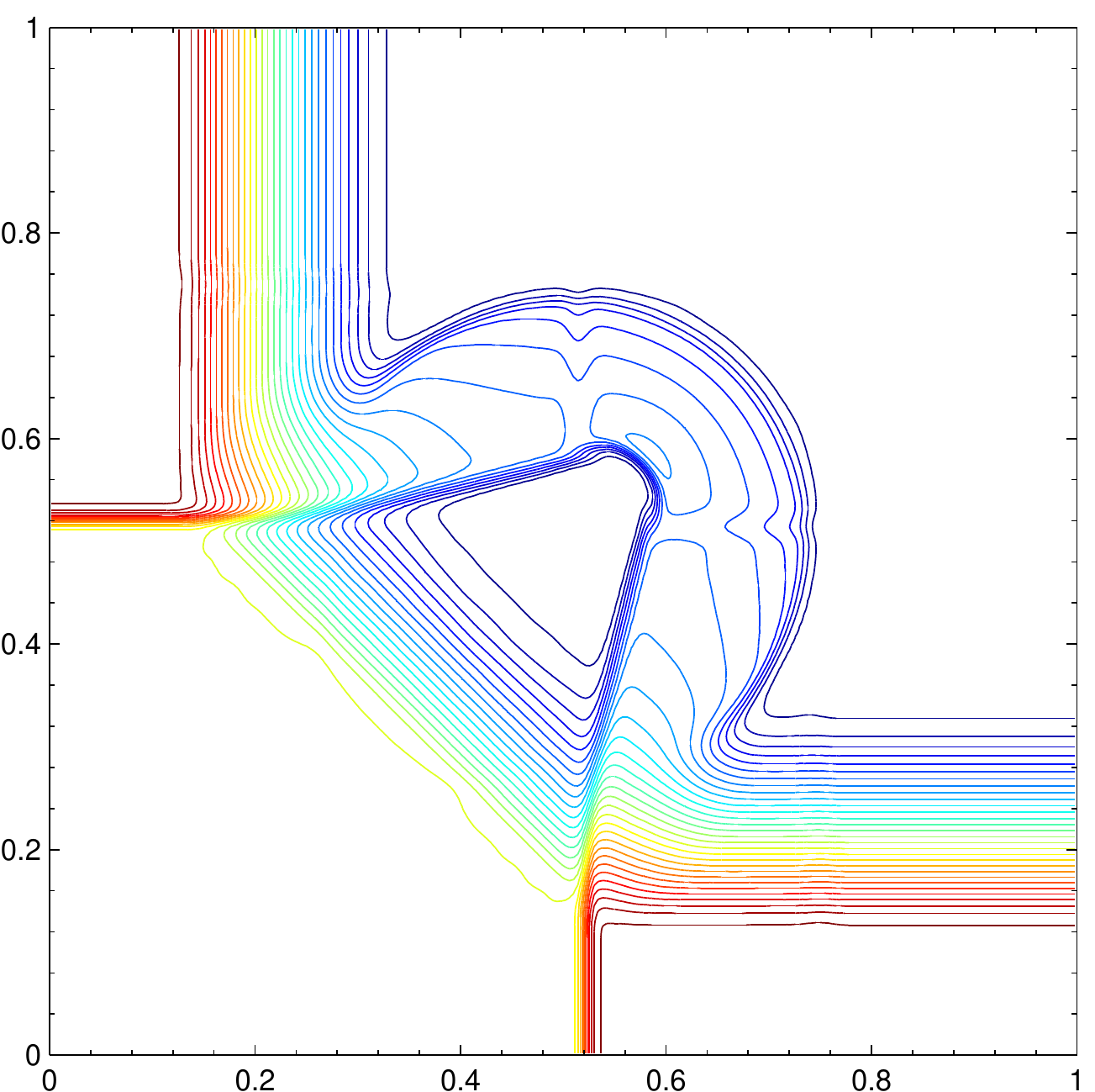}}
  {\includegraphics[width=0.48\textwidth]{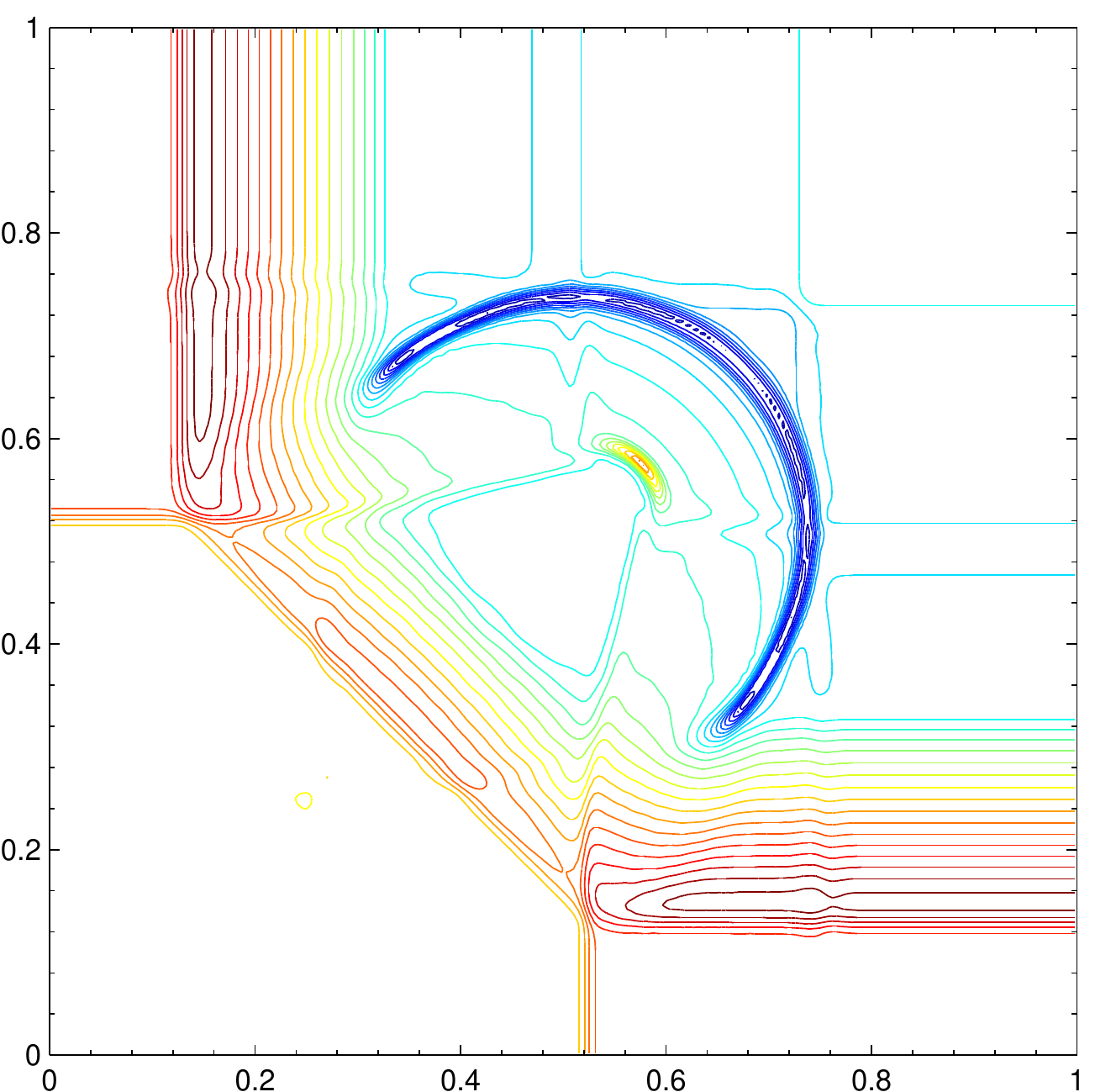}}
  {\includegraphics[width=0.48\textwidth]{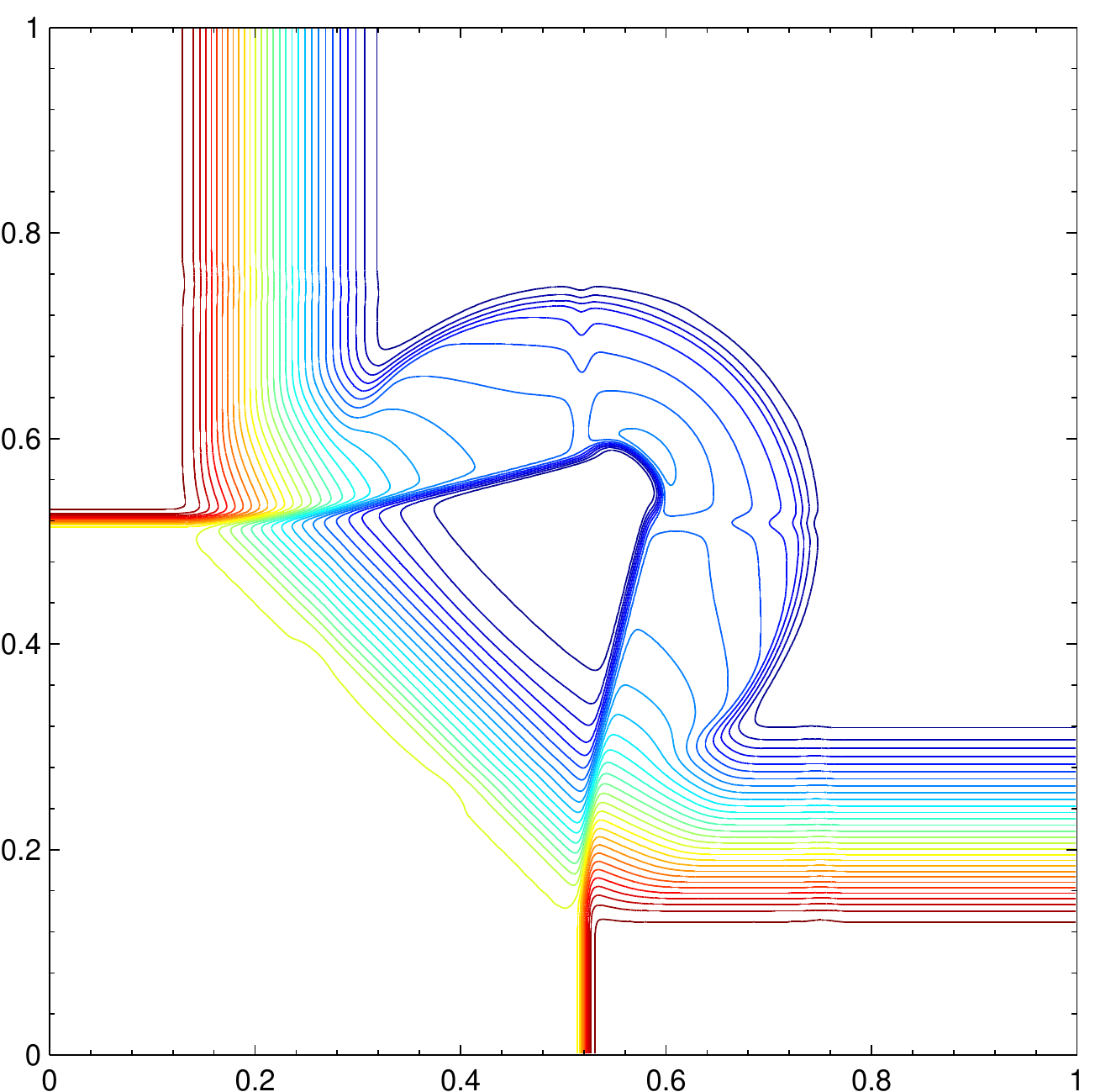}}
  {\includegraphics[width=0.48\textwidth]{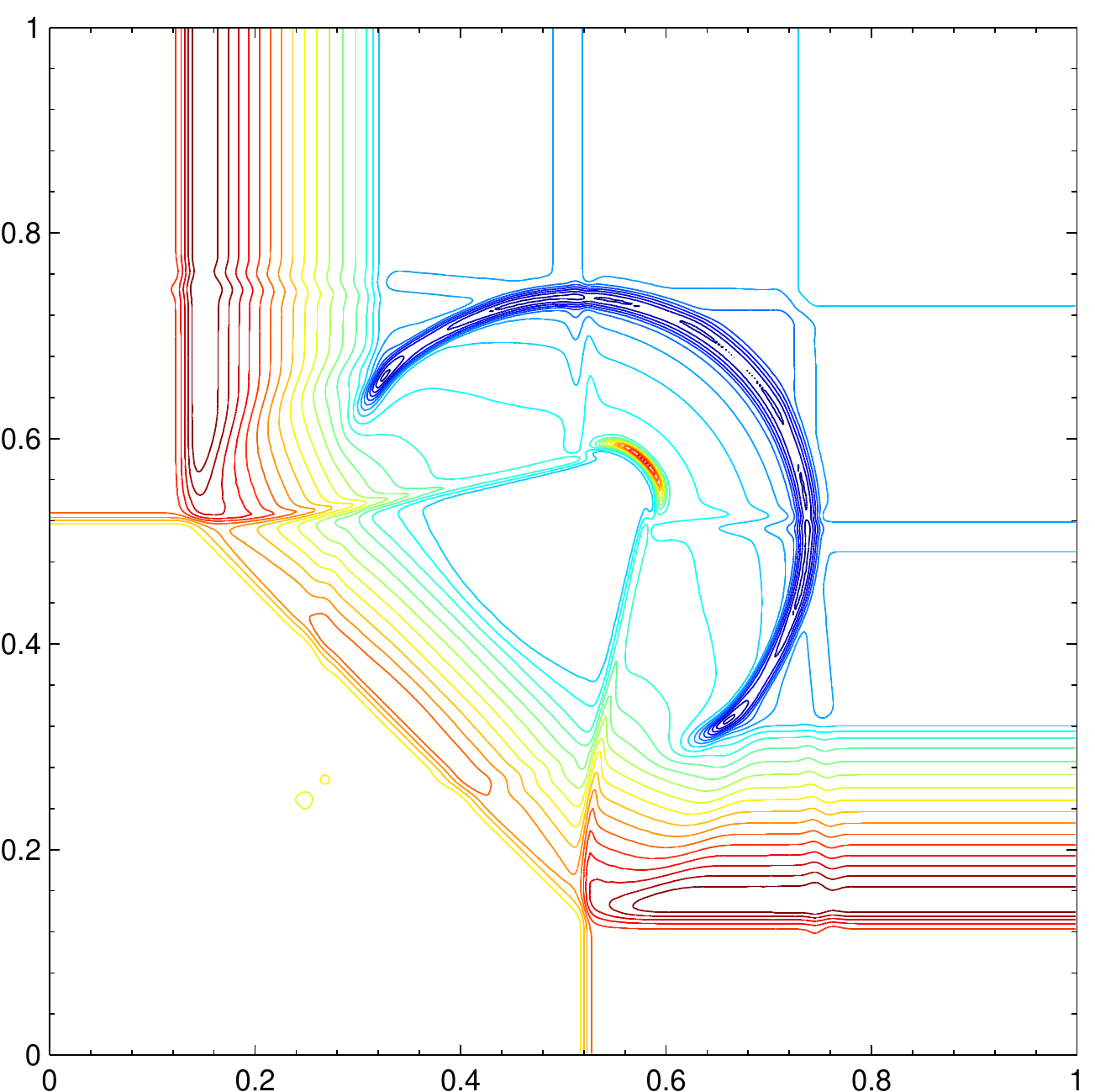}}
  \caption{\small Same as Fig. \ref{fig:2DRP1a}, except for the first configuration of Example \ref{example:RP2} and 25 equally spaced contour lines.
 }
  \label{fig:2DRP2a}
\end{figure}

\begin{figure}[htbp]
  \centering
  \includegraphics[width=0.48\textwidth]{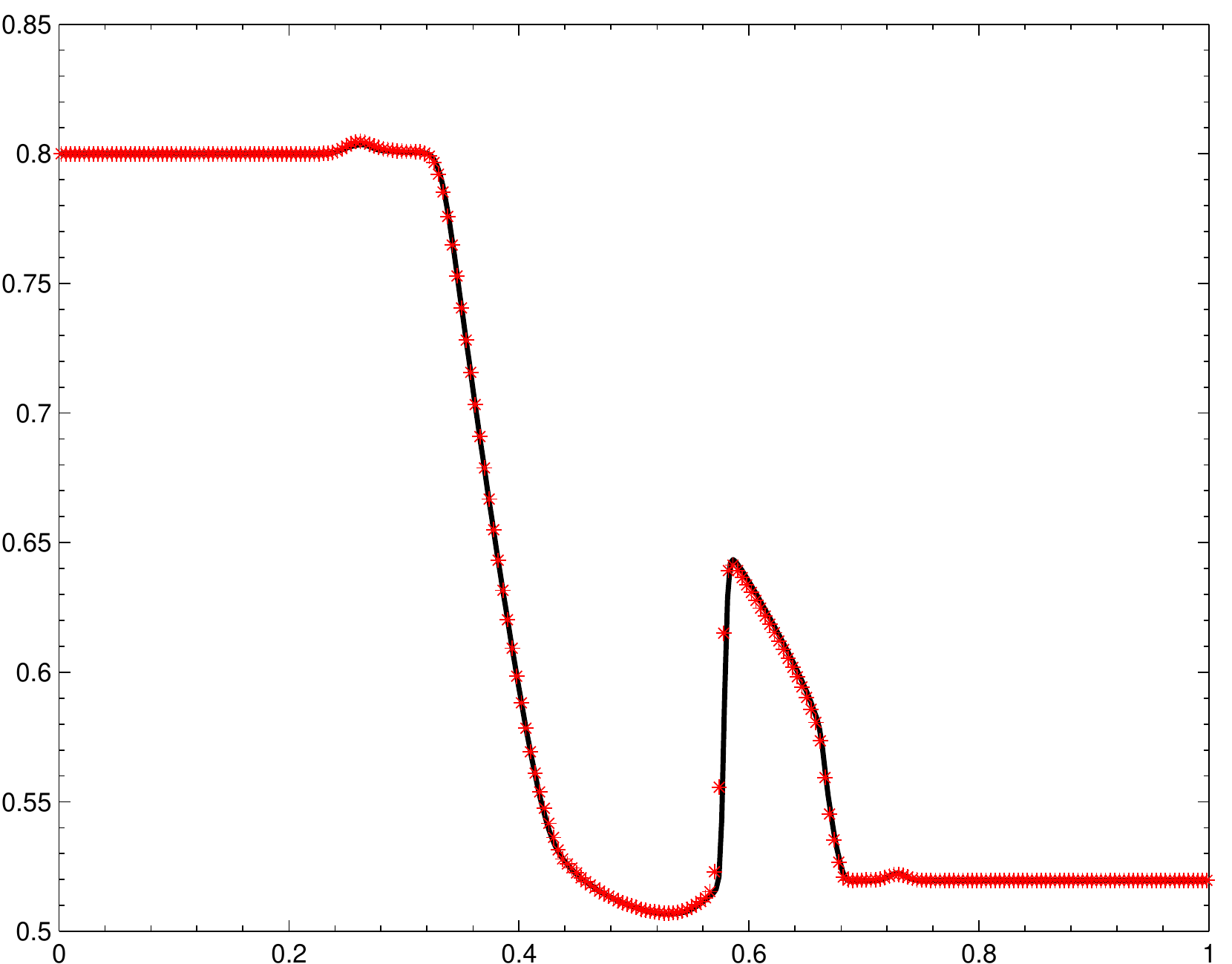}
  \includegraphics[width=0.48\textwidth]{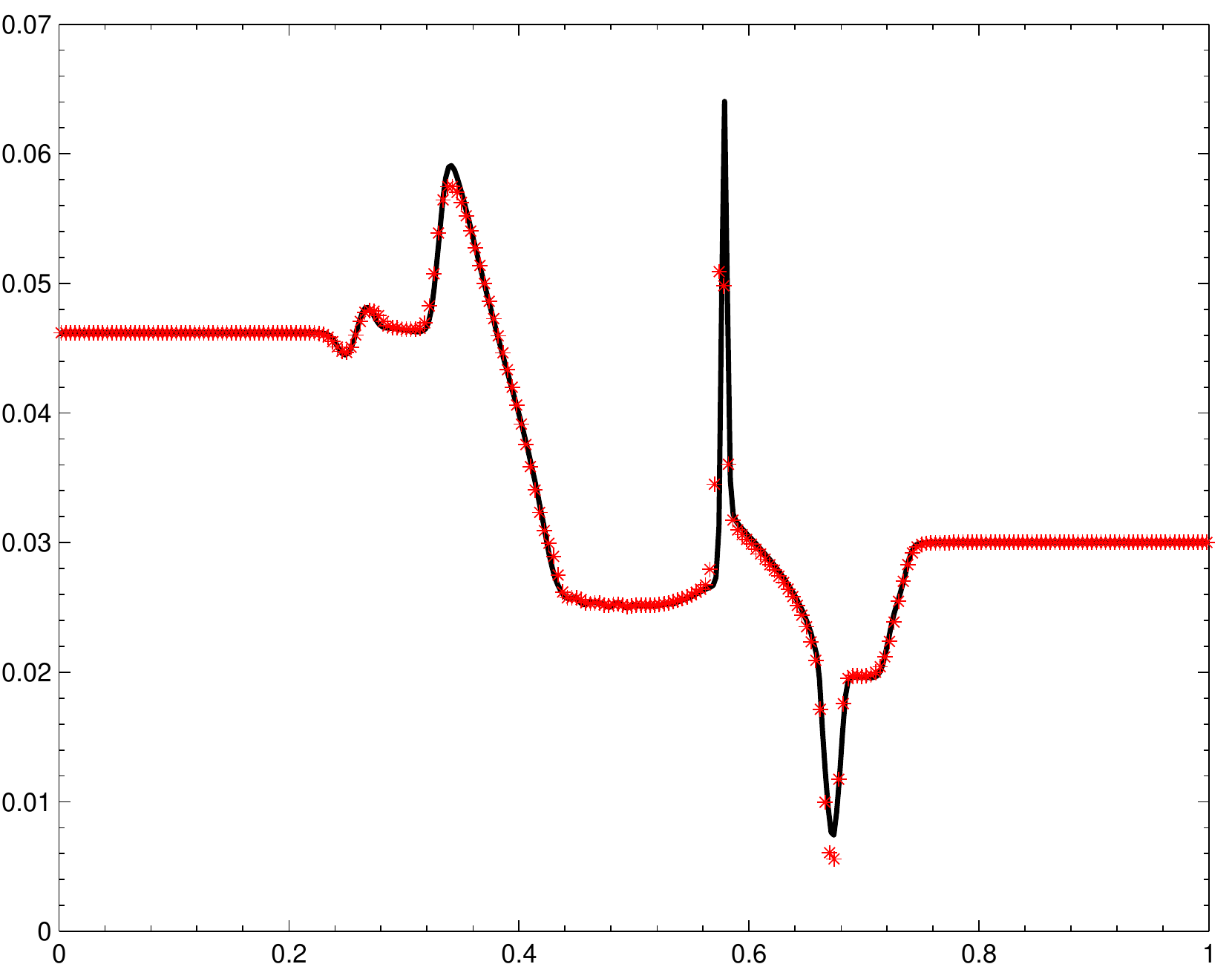}
  \caption{\small Same as Fig. \ref{fig:2DRP1a_Comparison}, except for the first configuration of Example \ref{example:RP2}.
 }
  \label{fig:2DRP2a_Comparison}
\end{figure}

The second case considers certain initial data \eqref{eq:2DRP2a}, and uncertain adiabatic index
$$
\Gamma (\xi) = 1.4+0.1 \xi.
$$
The contours of numerical mean and standard deviation of the density at time $t=0.2$ given by the gPC-SG method with $M=3$ and $250\times 250$ uniform cells
are displayed in Fig.~\ref{fig:2DRP2b}, where the reference solutions are given by the collocation method with $400 \times 400$ uniform cells.
The numerical results exhibit the good performance of the proposed gPC-SG method in resolving 2D flow structures and quantifying the uncertainties.
For a further comparison, the mean and standard deviation of the density are plotted along the line $y=x$, see
Fig. \ref{fig:2DRP2b_Comparison}. It can be seen clearly that the the means and standard deviations obtained
by the two methods are in good agreement.

\begin{figure}[htbp]
  \centering
  {\includegraphics[width=0.48\textwidth]{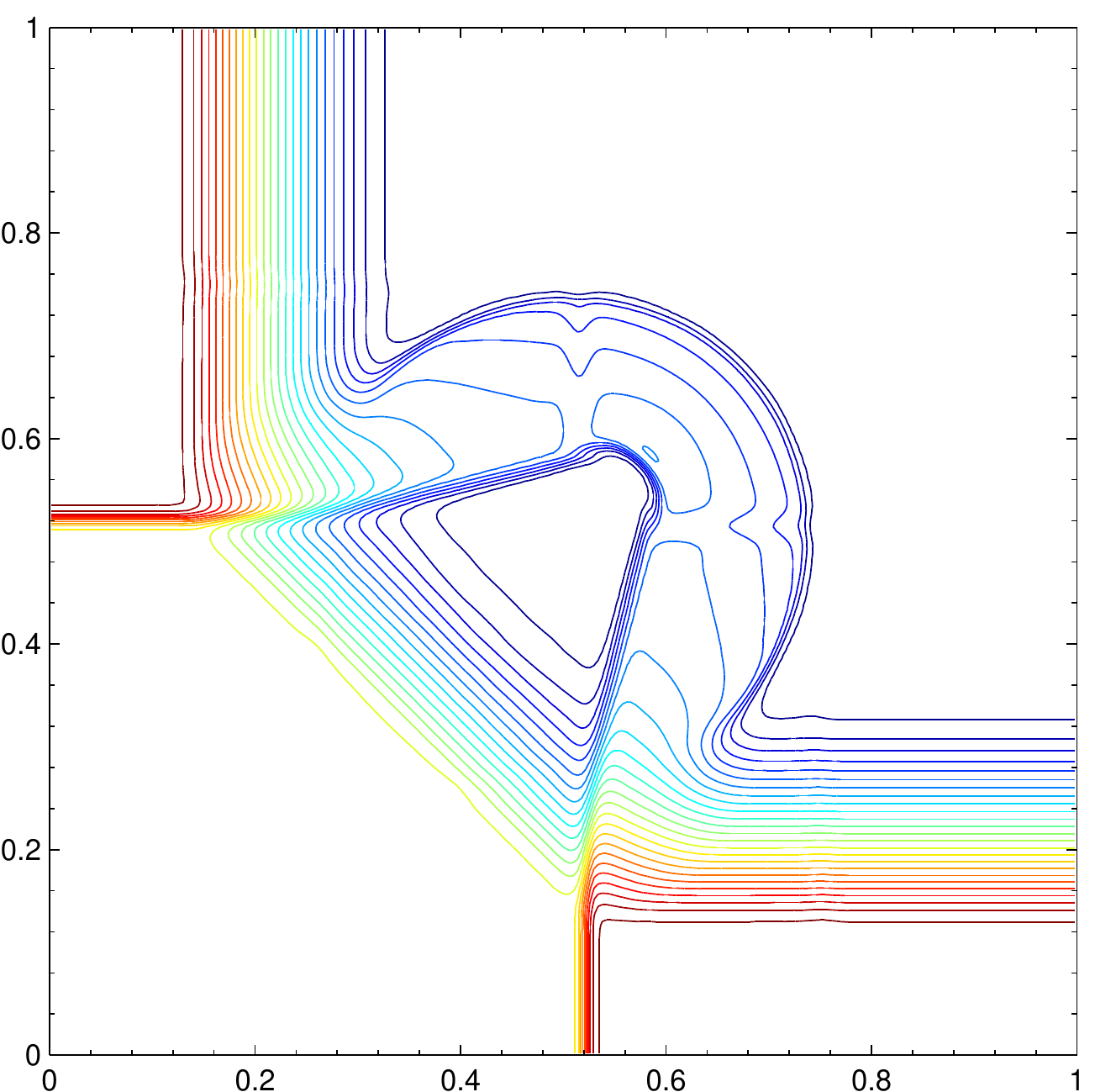}}
  {\includegraphics[width=0.48\textwidth]{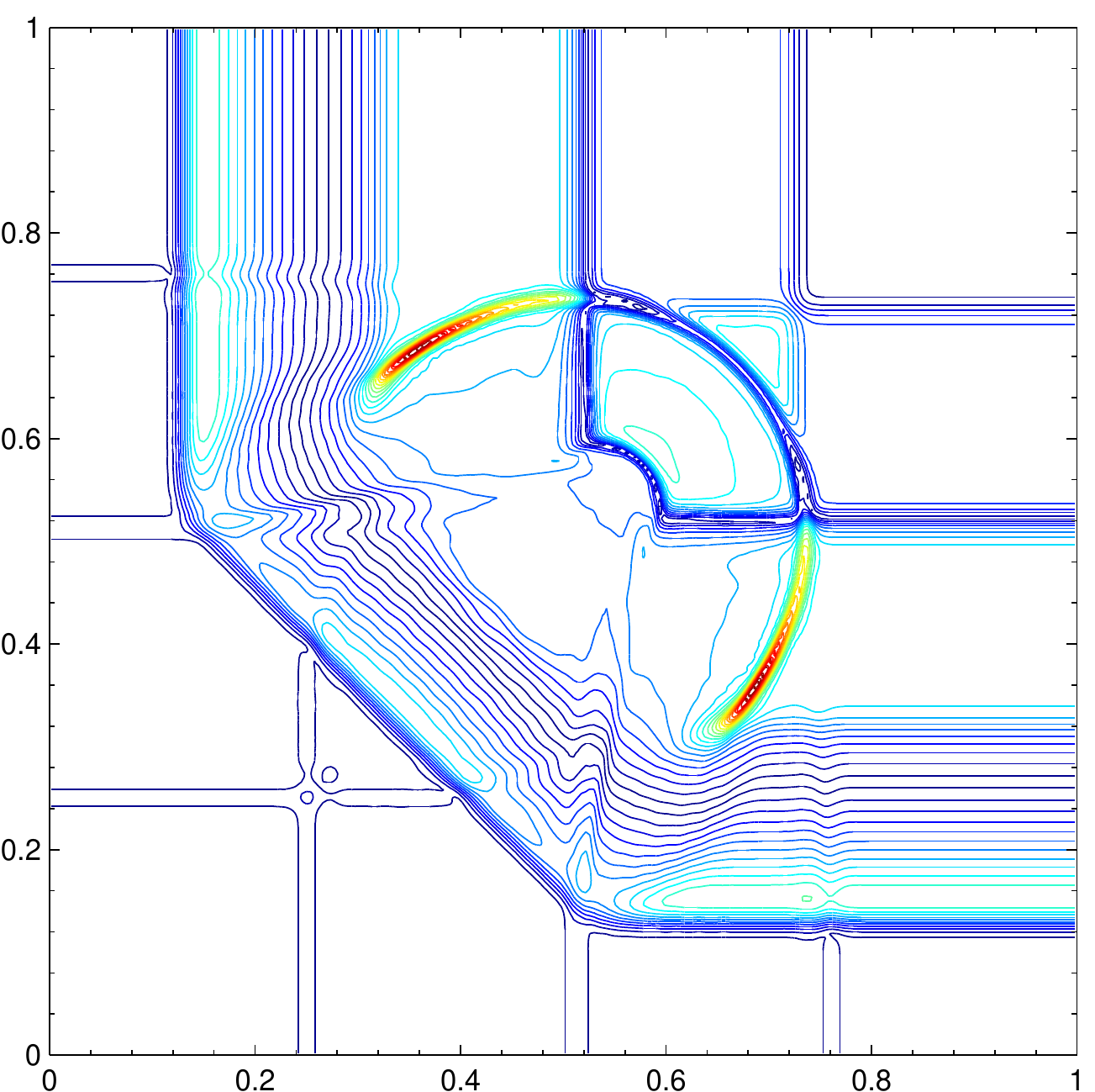}}
  {\includegraphics[width=0.48\textwidth]{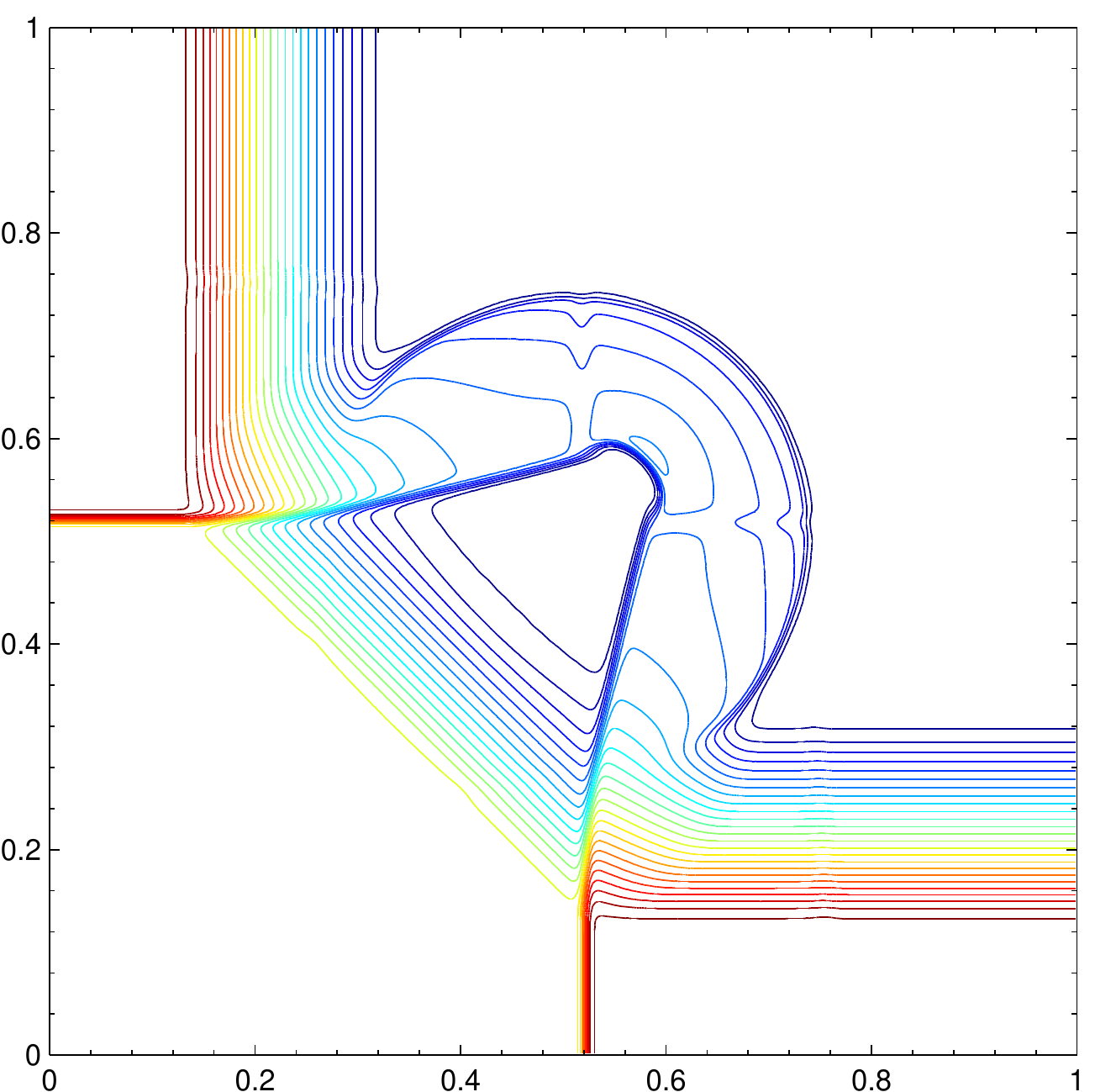}}
  {\includegraphics[width=0.48\textwidth]{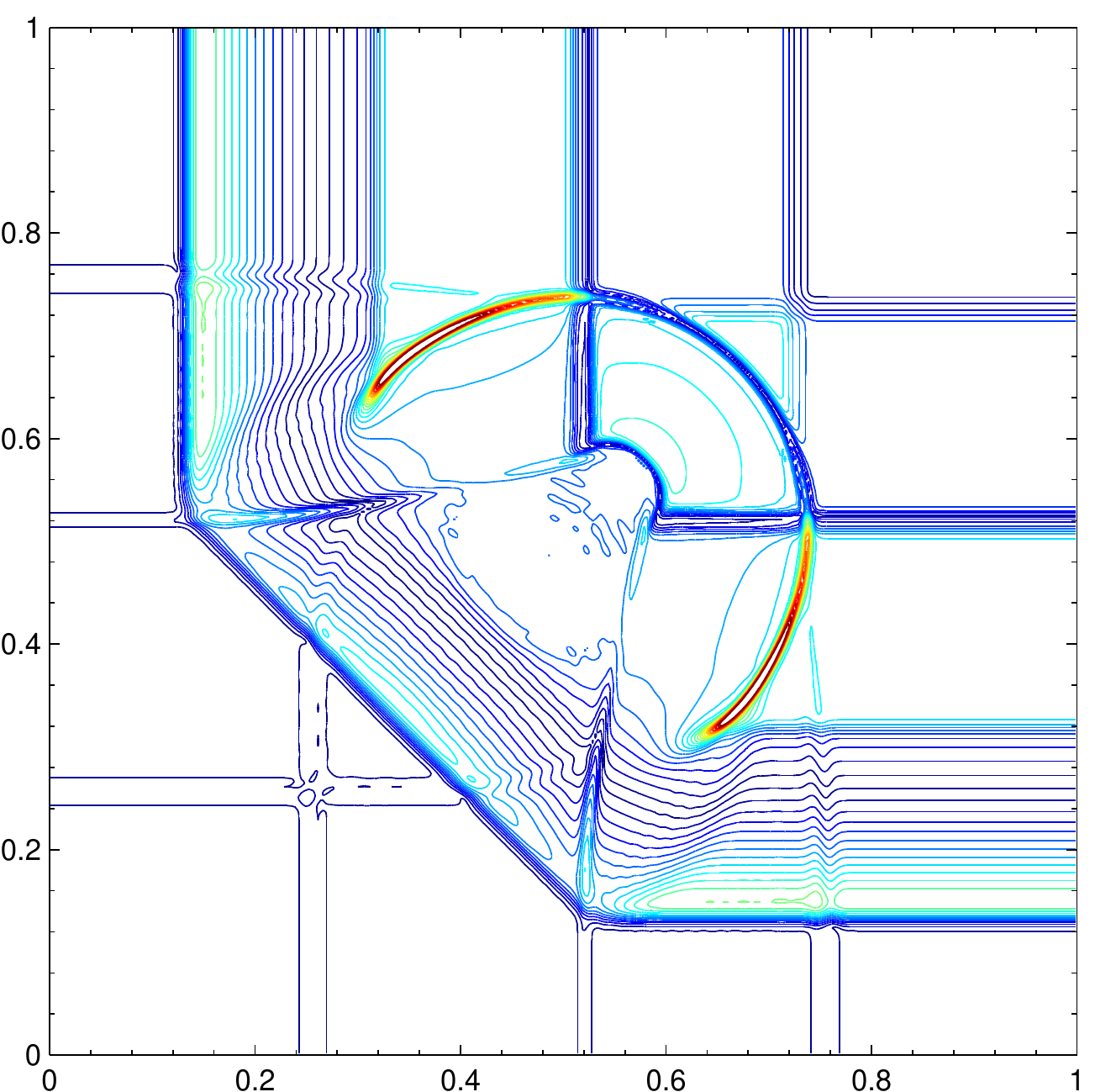}}
  \caption{\small Same as Fig. \ref{fig:2DRP1a}, except for the second configuration of Example \ref{example:RP2} and 25 equally spaced contour lines.
 }
  \label{fig:2DRP2b}
\end{figure}

\begin{figure}[htbp]
  \centering
  \includegraphics[width=0.48\textwidth]{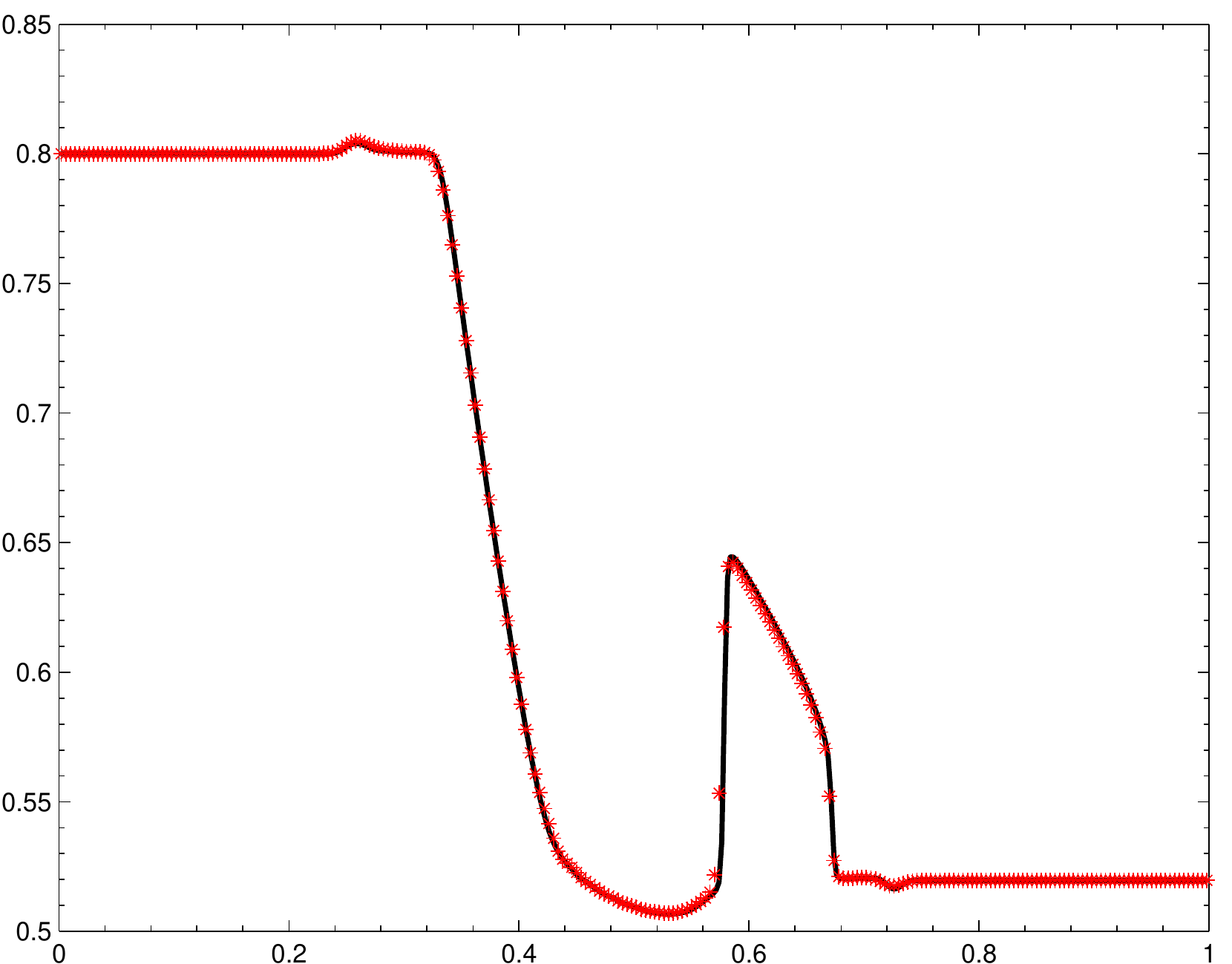}
  \includegraphics[width=0.48\textwidth]{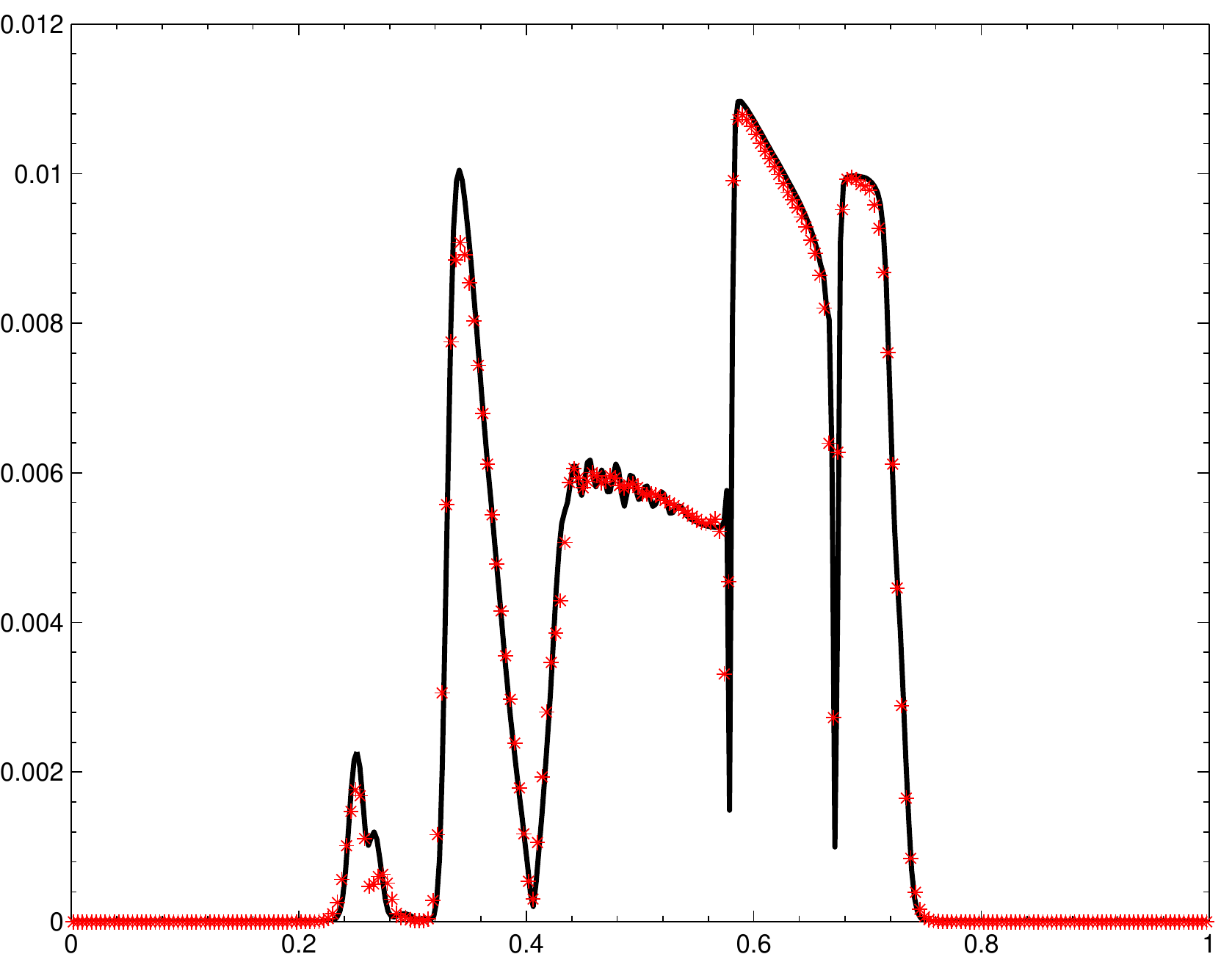}
  \caption{\small Same as Fig. \ref{fig:2DRP1a_Comparison}, except for the second configuration of Example \ref{example:RP2}.
 }
  \label{fig:2DRP2b_Comparison}
\end{figure}

\end{example}

\section{Conclusions}
\label{sec:conclude}

In this paper, an effective gPC stochastic Galerkin method for general
system of quasilinear hyperbolic conservation laws with uncertainty
is proposed. The advantage of the gPC-SG method is that its Galerkin
system of equations is proved to be symmetrically hyperbolic in 1D.
This allows one to employ a variety of mature deterministic schemes in
physical space and time. For 2D, the method can be readily adopted via
operator splitting. In this paper,
a high-order path-conservative finite volume WENO scheme in space, in
conjunction with a third-order explicit TVD Runge-Kutta method in time, is employed
and its properties studied.
Several examples for the Euler equations in 1D and 2D, exhibiting
complex structure in physical space, are presented to demonstrate the
accuracy and effectiveness of the proposed gPC-SG method.

\section*{Acknowledgements}

 KLW and HZT were partially supported by
the National Natural Science Foundation
of China (Nos.  91330205 \& 11421101).
DX was partially supported by AFOSR, DARPA, NSF.

\end{document}